\newenvironment{jfnote}{ \bgroup \color{blue} }{\egroup}
\newcommand{\oldStuff}[1]{}
\DeclareMathOperator{\SHom}{\mathscr{H}\text{\kern -3pt {\calligra\large om}}\,}
\newcommand{\naturals}{{\mathbb N}}
\DeclareMathOperator{\Trace}{Trace}
\theoremstyle{plain}
\newtheorem{theorem}{Theorem}[section]
\newtheorem{lemma}[theorem]{Lemma}
\theoremstyle{definition}
\newtheorem{definition}[theorem]{Definition}
\newtheorem{xca}{Exercise}[section]
\newcommand{\ignore}[1]{}
\newcommand{\reals}{{\mathbb R}}
\newcommand{\integers}{{\mathbb Z}}
\newcommand{\complex}{{\mathbb C}}
\newcommand\EE{\mathbb{E}}
\newcommand\II{\mathbb{I}}
\DeclareMathAlphabet{\mathcal}{OMS}{cmsy}{m}{n}
\newcommand\cM{\mathcal{M}}
\newcommand\cP{\mathcal{P}}
\newcommand\cR{\mathcal{R}}
\DeclareMathOperator{\Prob}{Prob}
\def\from{\colon}
\def\eqdef{\overset{\text{def}}{=}}
\def\Ann{\qopname\relax o{Ann}}
\DeclareMathOperator{\Spec}{Spec}
\DeclareRobustCommand
\p@\hbox{.}\mkern2mu\raise7\p@\hbox{.}\mkern1mu}}
\newcommand\xhookrightarrow[2][]{\ext@arrow 0062{\hookrightarrowfill@}{#1}{#2}}
\def\hookrightarrowfill@{\arrowfill@\lhook\relbar\rightarrow}
\begin{document}

\title[Relativized Alon Conjecture IV] 
{A Relativized Alon Second Eigenvalue
Conjecture for Regular Base Graphs IV: An Improved Sidestepping Theorem}

\author{Joel Friedman}
\address{Department of Computer Science,
        University of British Columbia, Vancouver, BC\ \ V6T 1Z4, CANADA}
\curraddr{}
\email{{\tt jf@cs.ubc.ca}}
\thanks{Research supported in part by an NSERC grant.}

\author{David Kohler}
\address{Department of Mathematics,
        University of British Columbia, Vancouver, BC\ \ V6T 1Z2, CANADA}
\curraddr{422 Richards St, Suite 170, Vancouver BC\ \  V6B 2Z4, CANADA}
\email{{David.kohler@a3.epfl.ch}}
\thanks{Research supported in part by an NSERC grant.}

%
\date{\today}

\subjclass[2010]{Primary 68R10}

\keywords{}

\begin{abstract}

This is the fourth in a series of articles devoted to showing that a typical
covering map of large degree to a fixed, regular graph has its new adjacency
eigenvalues within the bound conjectured by Alon for random regular graphs.

In this paper we prove a {\em Sidestepping Theorem} that is more general and
easier to use than earlier theorems of this kind.  Such theorems concerns a
family probability spaces $\{\cM_n\}$ of $n\times n$ matrices, where $n$ varies
over some infinite set, $N$, of natural numbers.  Many trace methods use simple
``Markov bounds'' to bound the expected spectral radius of elements of $\cM_n$:
this consists of choosing one value, $k=k(n)$, for each $n\in N$, and proving
expected spectral radius bounds based on the expected value of the trace of the
$k=k(n)$-power of elements of $\cM_n$.  {\em Sidestepping} refers to bypassing
such simple Markov bounds, obtaining improved results using a number of values
of $k$ for each fixed $n\in N$.

In more detail, if the $M\in \cM_n$ expected value of $\Trace(M^k)$ has an
asymptotic expansion in powers of $1/n$, whose coefficients are ``well
behaved'' functions of $k$, then one can get improved 
bounds on the 
spectral radius of elements of $\cM_n$ that hold with high probability.
Such asymptotic expansions are shown
to exist in the third article in this series for the families of matrices that
interest us; in the fifth and 
sixth article in this series we will apply the Sidestepping
Theorem in this article to prove the main results in this series of articles.

This article is independent of all other articles in this series; it can be
viewed as a theorem purely in probability theory, concerning random matrices
or, equivalently, the $n$ random variables that are the eigenvalues of the
elements of $\cM_n$.

\end{abstract}

\maketitle
\setcounter{tocdepth}{3}
\tableofcontents

\newcommand{\sePrelimProofs}{17}

\section{Introduction}

This is the fourth article in a series of six articles devoted to
developing trace methods to prove a relativization of the Alon
Second Eigenvalue Conjecture for covering maps of a fixed base
graph that is regular; we also get a sharper theorem when the
base graph is {\em Ramanujan}.
This article is independent of all the other articles and most of the
terminology they use; the results here are purely theorems in probability
theory.

The main goal of this article is to prove a strengthening of the 
{\em Sidestepping Lemma} of \cite{friedman_alon}, i.e., Lemma~11.1
there,
that allows us to infer---with very high probability---bounds
on the eigenvalues of random $n\times n$ matrices, provided that
the expected values of their $k$-power traces satisfy certain
``asymptotic expansions.''
Such asymptotic expansions are proven in Articles~I---III, the
first three articles in this series, for random
matrices that express the {\em new Hashimoto} matrix
(i.e., {\em new non-backtracking} matrix)
of random matrices of the random covering maps
to a fixed base graph.
Applying the main result in this article to those of
Articles~III (i.e., the third article in this series)
proves fundamental results about the 
location of the eigenvalues of such random matrices when the base
graph is regular.
This application forms the basis of Articles~V and VI of this series,
where the two main results of this series of articles are proven.

After proving the
{\em Sidestepping Lemma} in this article, akin to that of
\cite{friedman_alon}, 
we will give a consequence of this lemma
which we call the
{\em Sidestepping Theorem;} this theorem is simpler to state,
and suffices for our applications in 
Articles~V and VI.
Hence we state the Sidestepping Theorem as the main result in this 
paper, and it is this theorem that we will quote in Article~V.

The rest of this article is organized as follows.
In Section~\ref{se_main_results} we precisely state the Sidestepping
Theorem, as well as the two main lemmas that are used to prove it;
we call these lemmas the Exceptional Eigenvalue Bound and the
Sidestepping Lemma, and these lemmas have direct analogs in
\cite{friedman_alon}.
In Section~\ref{se_r0} we introduce a number of tools---most of which
are connected to the {\em shift operator}---and prove the
Exceptional Eigenvalue Bound.
In Section~\ref{se_r1} we prove the Sidestepping Lemma,
which uses the Exceptional Eigenvalue Bound and develops additional
properties of the shift operator.
In Section~\ref{se_side_theorem} we easily prove the Sidestepping Theorem
as a consequence of the Sidestepping Lemma.

For the rest of the section we motivate the Sidestepping Theorem in this
article and make some rough remarks on its proof.

\subsection{Motivation and Rough Idea Behind the Sidestepping Theorem}

The most basic trace methods use
a simple Markov-type bound as follows:
if $M$ is an $n\times n$ matrix with real or complex entries
and with real eigenvalues,
then if we use $\rho(M)$ to denote the spectral radius of 
$M$ (i.e., the largest absolute value among $M$'s
eigenvalues), 
then for any even integer $k>0$ we have
$$
\rho(M)^k \le \Trace(M^k);
$$
it follows that if $\cM$ is a probability space of such matrices, then
for any $\alpha>0$ 
\begin{equation}\label{eq_Markov_type}
\Prob_{M\in\cM}[ \rho(M) \ge \alpha ] \le \alpha^{-k} 
\EE_{M\in\cM}[\Trace(M^k)].
\end{equation} 
This is a simple Markov-type bound, used in
\cite{broder,friedman_random_graphs,friedman_relative,linial_puder,
puder} to bound the {\em new adjacency spectral radius} of random covers
of a base graph.  Unfortunately a direct Markov method like this
cannot yield the optimal eigenvalue bound conjectured to hold with
high probability by Alon (although \cite{puder} obtains results that
are quite close).
The problem is that certain {\em tangles} cause
$$
\EE_{M\in\cM}[\Trace(M^k)]
$$
to be too large for \eqref{eq_Markov_type} to be useful.  We use the
term ``sidestepping'' for results that avoid this direct Markov-type
approach.

In more detail, consider the situation
where $\cM_n$ is a probability space of such $n\times n$ matrices,
defined for an infinite number of positive integers, $n$, such that
for any some
integer $r>1$ one has an asymptotic expansion
\begin{equation}\label{eq_basic_idea}
\EE_{M\in\cM_n}[\Trace(M^k)] 
=f_0(k)+f_1(k)/n + \cdots + f_{r-1}(k)/n^{r-1}+O(1)f_r(k)/n^r
\end{equation}
valid for all $k\le n^{1/2}$.
To fix ideas, say that there are real $\Lambda_1>\Lambda_0>0$ such that
for $i\le r-1$ we have $f_i(k)=O(\Lambda_0^k)$
and that $f_r(k)=O(\Lambda_1^k)$.
Then choosing $k$ to balance the bounds on $f_0(k)$ and 
$f_r(k)/n^r$ (so $k$ will be proportional to $\log n$), 
the Markov-type bound \eqref{eq_Markov_type} gives 
a high probability bound for $\rho(M)$ of at most
$$
\Lambda_0^{(r-1)/r}\Lambda_1^{1/r},
$$
which is a function tending to $\Lambda_0$ as $r\to\infty$.

We remark that later in our main theorems, the upper bounds
$O(\Lambda_0^k)$ and $O(\Lambda_1^k)$ (that appear in this section)
will be replaced by
slightly weaker upper bounds; we write the bounds
$O(\Lambda_0^k)$ and $O(\Lambda_1^k)$ for simplicity, to illustrate
the main ideas.

The Sidestepping Theorem deals with the following situation: say that
for any $r$ we have an expansion
\eqref{eq_basic_idea}, but that for some $i<r$ we have that
$f_i(k)$ can be written as $O(\Lambda_0^k)$ plus a finite number
of terms of the form $C k^a\lambda^k$ for an integer $a$, $C\in\complex$, and
$\lambda$ is real with $\Lambda_0<|\lambda|\le \Lambda_1$; 
we call any sum of such terms {\em polyexponentials in $k$}.  
Then we can still infer a high probability bound
of $\Lambda_0+g(r)$ where $g(r)\to 0$ as $r\to \infty$, under appropriate
conditions that will be made precise in Section~\ref{se_main_results}.

Our proof uses the following ideas.
Let $S$ denote the ``shift operator in $k$,'' i.e., 
$S$ acts on function of
$k$ via $(S f)(k)\eqdef f(k+1)$.
Then the function $f(k)\eqdef C k^a\lambda^k$ as above
is annihilated by
the operator
$(S-\lambda)^{a+1}$.
It follows that there is some polynomial $P=P(S)$ that annihilates
all the polyexponential terms in the $f_i(k)$ for $i=0,\ldots,r-1$
in \eqref{eq_basic_idea}.
Any fixed such polynomial $P=P(S)$
takes a function in $k$ bounded by $O(\rho^k)$ to function bounded
by $O(k^d\rho^k)$ (where $d=\deg(P)$).
So now we apply $P(S)$ to both sides of
\eqref{eq_basic_idea} and try to infer information on the location
of the eigenvalues of $M\in\cM_n$.
Furthermore, it is easy to see that for any polynomial, $P$,
$$
P(S)(\lambda^k) = P(\lambda)\lambda^k
$$
(where $\lambda^k$ on the left-hand-side is shorthand for the
function $k\mapsto \lambda^k$); this allows us to control the 
effect of $P(S)$ on the expected value of $\Trace(M^k)$ provided
that $\lambda$ is real and $\Lambda_0<|\lambda|\le\Lambda_1$.

Of course, in the above one is using the fact that 
\eqref{eq_basic_idea} holds for many values of $k$ for a given $n$,
and hence one can apply polynomials in $S$ to
\eqref{eq_basic_idea}; furthermore, the left-hand-side
becomes the expected value of the sum of
$P(S)\lambda_i(M)^k$
over the eigenvalues, $\lambda_i(M)$, of $M$, rather than merely
$\lambda_i(M)^k$.
By contrast, the Markov-type bound only uses a single value of $k$.
[In Markov-type methods and our sidestepping methods, 
all the values of $k$
that we use are proportional to $\log n$.]

Our Sidestepping Theorem is stated somewhat more generally, because
the non-backtracking matrix or Hashimoto matrix of a $d$-regular graph
can also have complex eigenvalues, but only those of absolute value
$(d-1)^{1/2}$. 
In our applications, $\Lambda_1=d-1+\epsilon$ and 
$\Lambda_0=(d-1)^{1/2}+\epsilon$, where $d$ is fixed and
$\epsilon>0$ is small, and $f_i(k)$ begins to contain
polyexponential terms $Ck^a\lambda^k$ with $\lambda>\Lambda_0$ for
$i$ at least of order roughly $d^{1/2}$.

We remark that our Sidestepping Theorem concerns only the random variables
$\lambda_i(M)$, so that one could formulate this theorem without any
reference to matrices, only referring to the random variables
$\lambda_1(M),\ldots,\lambda_n(M)$ of $M\in\cM_n$.
However, we prefer to state the Sidestepping Theorem in (its equivalent
form) involving eigenvalues of matrices, because this is closer to our
intuition and is the context in which we apply the theorem.

\section{Main Results}
\label{se_main_results}

In this section we state the main results in the paper,
namely Lemmas~\ref{le_sidestep_one} and~\ref{le_sidestep_two},
and the result that we need for Article~V, namely
Theorem~\ref{th_sidestep}.
For convenience, we repeat some of the notation and definitions in
Article~I.

\subsection{Preliminary Notation}

In this subsection we introduce some notation that is completely standard,
except for possibly \eqref{eq_B_L} below.

We use $\reals,\complex,\integers,\naturals$
to denote, respectively, the
the real numbers, the complex numbers, the integers, and positive
integers or
natural numbers;
we use $\integers_{\ge 0}$ to denote the set of non-negative
integers.
If $n\in\naturals$ we use $[n]$ to denote $\{1,2,\ldots,n\}$.

If $\rho\in\reals$ is non-negative and $z\in\complex$, we use
$B_\rho(z)$ to denote the {\em closed}
ball of radius $\rho$ about $z$, i.e., 
the set $\{x\in\complex \ | \ |x-z|\le \rho\}$.
If $a,b\in\reals$, we use $[a,b]$ to denote the closed interval
$\{x\;|\; a\le x\le b\}\subset \reals$; we often use the
inclusion $\reals\subset\complex$ (as the complex numbers with vanishing
imaginary part), to view subsets of $\reals$ as subsets of $\complex$.

If $L\subset\complex$ and $\rho\ge 0$ is real, we use the 
(fairly standard) notation
\begin{equation}\label{eq_B_L}
B_\rho(L) = \bigcup_{\ell\in L} B_\rho(\ell)  \ .
\end{equation} 

All probability spaces are finite; hence a probability space
is a pair $\cP=(\Omega,P)$ where $\Omega$ is a finite set and
$P\from \Omega\to\reals_{>0}$ with $\sum_{\omega\in\Omega}P(\omega)=1$;
hence an {\em event} is any subset of $\Omega$.
We emphasize that $\omega\in\Omega$ implies that $P(\omega)>0$ with
strict inequality (i.e., no atoms have zero probability).
We use $\cP$ and $\Omega$ interchangeably when $P$ is
understood and confusion is unlikely.

A {\em complex-valued random variable} on $\cP$ or $\Omega$
is a function $f\from\Omega\to\complex$, and similarly for real-,
integer-, and natural-valued random variable; we denote its
$\cP$-expected value by
$$
\EE_{\omega\in\Omega}[f(\omega)]=\sum_{\omega\in\Omega}f(\omega)P(\omega).
$$
If $\Omega'\subset\Omega$ we denote the probability of $\Omega'$ by
$$
\Prob_{\cP}[\Omega']=\sum_{\omega\in\Omega'}P(\omega')
=
\EE_{\omega\in\Omega}[\II_{\Omega'}(\omega)],
$$
where $\II_{\Omega'}$ denotes the indicator function of $\Omega'$.
At times we write $\Prob_{\cP}[\Omega']$ and $\II_{\Omega'}$
where $\Omega'$ is
not a subset of $\Omega$, by which we mean
$\Prob_{\cP}[\Omega'\cap\Omega]$ and $\II_{\Omega'\cap\Omega}$.

A {\em matrix-valued random variable} (with entries in $\reals$ 
or $\complex$) is
similarly defined.

\subsection{$\EE{\rm in}$ and $\EE{\rm out}$}

If $\cM$ is a probability space of square matrices, and 
$R\subset \complex$, it will be very useful to define
\begin{equation}\label{eq_Ein_Eout}
\EE{\rm in}_\cM[R] \quad\mbox{and}\quad 
\EE{\rm out}_\cM[R] \ ,
\end{equation} 
respectively,
as the expected number of eigenvalues of $M\in\cM$ (counted with multiplicity)
that lie, respectively, in $R$ and not in $R$.  Of course,
these two expected values are non-negative; furthermore, if $\cM$ consists 
entirely of $n\times n$ matrices, then 
$$
\EE{\rm in}_\cM[R] + \EE{\rm out}_\cM[R] = n.
$$

\subsection{Polyexponentials and Approximate Polyexponentials}

\begin{definition}\label{de_polyexp}
We say that a function $Q\from\naturals\to\complex$ is a {\em polyexponential
function} 
if it is a linear combination of functions of the form
$$
f(k) = k(k-1)\ldots (k-m+1)\ell^{k-m}
$$
for some $m\in\integers_{\ge 0}$ and $\ell\in\complex$, or, equivalently,
if we have
\begin{equation}\label{eq_de_polyexp}
Q(k) = \sum_{\ell\in L}p_\ell(k) \ell^k
\end{equation} 
where $p_\ell=p_\ell(k)$ are polynomials and $L\subset\complex$ is a finite
set, and where the expression $p_\ell(k)\ell^k$ with $\ell=0$ is taken
to mean any function that vanishes for $k$ sufficiently large.
It is easy to see that
the representation \eqref{eq_de_polyexp} is unique if we insist that
$L$ is minimal, i.e.,
$\ell\in L$ appears only if it is needed, i.e.,
if $p_\ell(k)\ell^k$ is a nonzero function;
we refer to the minimal set $L$ as the {\em set of bases of $Q$},
and if $\ell\in L$ we refer to the function $p_\ell(k)\ell^k$ as
the {\em $\ell$-part} of $Q$.
\end{definition}

For example, if $n\in\naturals$, $i,j\in [n]$, and
$M$ is any $n\times n$ matrix over the complex numbers, then
$f(k)=(M^k)_{i,j}$ (the $i,j$-th entry of $M^k$) is a
polyexponential function of $k$.
The fact that a Jordan block with eigenvalue $0$ is nilpotent justifies
our convention that for $\ell=0$, $p_\ell(k)\ell^k$ refers to any function
of finite support.

\begin{definition}
Let $\rho\in\reals$.
We say that a function $f\from\naturals\to\complex$ is
{\em of growth $\rho$} if for every $\epsilon>0$ we have that
$|f(k)|<(\rho+\epsilon)^k$ for $k$ sufficiently large.
By an {\em approximate polyexponential with error growth $\rho$} we mean
a function $Q\from\naturals\to\complex$
that can be written as $Q(k)=f(k)+g(k)$, where $g$ is of growth $\rho$ and
$f$ is a polyexponential;
given such a $Q$ and $\rho$, it is easy to see that
$f$ and $g$ are uniquely determined if
we insist that all bases of $f$ are greater than $\rho$ in absolute
value (if $|\ell|\le \rho$, we may move a term $p_\ell(k)\ell^k$
of $f$ to
$g$); we refer to this unique $f$ as the {\em polyexponential part} of
$Q$ (with respect to $\rho$), and to the
bases of $f$ as the {\em larger bases} of $Q$ (with respect
to $\rho$);
if $\ell$ is one of these larger bases, then the
{\em $\ell$-part of $Q$} is the $\ell$-part of $f$, i.e., the term
$p_\ell(k)\ell^k$ in the sum
$$
f(k)
=\sum_{\ell\in L}p_\ell(k) \ell^k .
$$
\end{definition}

\subsection{$(\Lambda_0,\Lambda_1)$-Bounded Matrix Models}

We now define the type of families of random matrices that are of interest
to us in this article.

\begin{definition}\label{de_matrix_model}
Let $\Lambda_0<\Lambda_1$ be positive real numbers.  
By a {\em $(\Lambda_0,\Lambda_1)$-bounded
matrix model} we mean a collection of finite probability spaces
$\{\cM_n\}_{n\in N}$ where $N\subset\naturals$ is an
infinite subset, and where the atoms of $\cM_n$ are
$n\times n$ real-valued matrices whose eigenvalues lie in the set
$$
B_{\Lambda_0}(0) \cup [-\Lambda_1,\Lambda_1]
$$
in $\complex$.
Let $r\ge 0$ be an integer and $K\from\naturals\to\naturals$
be a function such that $K(n)/\log n\to \infty$ as $n\to\infty$.
We say that this model has an
{\em order $r$ expansion} with {\em range $K(n)$} 
(with $\Lambda_0,\Lambda_1$ understood) if
as $n\to\infty$ we have that
\begin{equation}\label{eq_matrix_model_exp}
\EE_{M\in\cM_n}[\Trace(M^k)] =
c_0(k) + c_1(k)/n +\cdots+c_{r-1}(k)/n^{r-1}+ O(c_r(k))/n^r
\end{equation}
for all $n\in N$ and integers $k$ with $1\le k\le K(n)$,
where (1) $c_r=c_r(k)$ is of growth $\Lambda_1$,
(2) the constant in the $O(c_r(k))$ is
independent of $k$ and $n$, and
(3) for $0\le i<r$, $c_i=c_i(k)$ is an approximate polyexponential with
$\Lambda_0$ error term and whose larger bases 
(i.e., larger than $\Lambda_0$ in absolute value)
lie in $[-\Lambda_1,\Lambda_1]$;
at times we speak of an {\em order $r$ expansion} without 
explicitly specifying $K$.
When the model has such an expansion,
then we use the notation $L_r$ to refer to the union of all larger
bases of $c_i(k)$ (with respect to $\Lambda_0$) over all $i$ between
$0$ and $r-1$, and call $L_r$ the {\em larger bases (of the order $r$
expansion)}.
\end{definition}

[We remark that since $\Lambda_0$ is positive, it is easy to see
that the expansion
\eqref{eq_matrix_model_exp}
also holds for $k=0$ (by possibly redefining $c_r(0)$ so that it is
positive); we prefer to insist that $k\ge 1$ so that in Articles~I--III
we don't need to worry about certain technicalities when we define
{\em algebraic models} there (such as
the empty graph and non-backtracking
walks of length $0$).]

The main theorems in this paper concern a single, fixed matrix model
(with fixed $\Lambda_0,\Lambda_1$ as above), and we will reserve
the symbols $\cM_n$ and $L_r$ (and $\Lambda_0,\Lambda_1$) to have
the above meaning.
We also note that \eqref{eq_matrix_model_exp} implies that
for fixed $k\in\naturals$,
\begin{equation}\label{eq_c_i_limit_formula}
c_i(k) = \lim_{n\in N,\ n\to\infty}
\Bigl( \EE_{M\in\cM_n}[\Trace(M^k)] -
\bigl( c_0(k) + \cdots+c_{i-1}(k)/n^{i-1} \bigr) \Bigr) n^i
\end{equation} 
for all $i\le r-1$; we conclude that the $c_i(k)$ are uniquely determined,
and that $c_i(k)$ is independent of $r$ for any $r>i$ for which
\eqref{eq_matrix_model_exp} holds.
We also see that if \eqref{eq_matrix_model_exp} holds for some value of
$r$, then it also holds for smaller values of $r$.
It follows that if \eqref{eq_matrix_model_exp} holds for some $r$, then
$L_i$ is defined for each $i<r$ (as the set of larger bases of the
functions $c_0(k),\ldots,c_{i-1}(k)$).
Furthermore $L_i$ is empty iff
$c_0(k),\ldots,c_{i-1}(k)$ are all functions of growth $\Lambda_0$.

We remark that in our applications (see Article~V), 
$N$, the set of $n$ for which
$\cM_n$ is defined, will generally be a proper subset of $\naturals$;
indeed, $n$ will be multiples of the number of directed edges of a
fixed {\em base graph}, $B$, since $\cM_n$ are related to the
Hashimoto matrix,
$H_G$, of random covers, $G$, of $B$.

For brevity we often write $n\to \infty$ or $\lim_{n\to\infty}$
understanding that $n$ is confined to the set $N$ for which $\cM_n$ is
defined.
Similarly, when we write an equation involving $n$, especially
one involving $\cM_n$, we understand $n$ to be confined to $N$.

\subsection{The Sidestepping Theorem}

For Article~V we need only the following theorem.
This theorem will be proven in two main steps,
represented in Lemmas~\ref{le_sidestep_one} 
and~\ref{le_sidestep_two} below; the theorem follows almost
immediately from Lemma~\ref{le_sidestep_two}.

\begin{theorem}\label{th_sidestep}
Let $\{\cM_n\}_{n\in N}$ be a $(\Lambda_0,\Lambda_1)$-bounded matrix model,
for some real $\Lambda_0<\Lambda_1$, that
for all $r\in\naturals$ has an order $r$ expansion;
let $p_i(k)$ denote the polyexponential part of $c_i(k)$
(with respect to $\Lambda_0$) in \eqref{eq_matrix_model_exp}
(which is independent of $r\ge i+1$ by
\eqref{eq_c_i_limit_formula}).
If $p_i(k)=0$ for all $i\in\integers_{\ge 0}$, then
for all $\epsilon>0$ and $j\in\integers_{\ge 0}$
\begin{equation}\label{eq_largest_j}
\EE{\rm out}_{\cM_n}
\bigl[ B_{\Lambda_0+\epsilon}(0) \bigr]
= O(n^{-j}) .
\end{equation}
Otherwise let $j$ be the smallest integer for which
$p_j(k)\ne 0$.
Then for all $\epsilon>0$, and for all $\theta>0$ sufficiently small we have
\begin{equation}\label{eq_mostly_near_Lambda_0_or_Ls}
\EE{\rm out}_{\cM_n}
\bigl[ B_{\Lambda_0+\epsilon}(0)\cup B_{n^{-\theta}}(L_{j+1}) \bigr]
= o(n^{-j});
\end{equation} 
moreover, if $L=L_{j+1}$ is the (necessarily nonempty) set of bases of $p_j$,
then for each $\ell\in L$ there is a real $C_\ell>0$ such that
\begin{equation}\label{eq_thm_p_j_pure_exp}
p_j(k)=\sum_{\ell\in L} \ell^k C_\ell ,
\end{equation} 
and
for all $\ell\in L$
for sufficiently small $\theta>0$,
\begin{equation}\label{eq_thm_C_ell_as_limit}
\EE{\rm in}_{\cM_n}\bigl[ B_{n^{-\theta}}(\ell) \bigr]  
= n^{-j} C_\ell + o(n^{-j}) .
\end{equation} 
\end{theorem}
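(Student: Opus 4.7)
The plan is to deduce the theorem from the two preceding lemmas, with a short matching argument that pins down the pure-exponential form of $p_j$. I would first unpack the set $L_{j+1}$: since $p_0 = \cdots = p_{j-1} = 0$ by the minimality of $j$, the set $L_j$ is empty and $L_{j+1}$ consists exactly of the bases of $p_j$, as claimed in the theorem. Because the atoms of $\cM_n$ are real matrices, each $c_i(k)$ is real-valued by the limit formula \eqref{eq_c_i_limit_formula}, so each $\ell \in L_{j+1}$ is a real number with $\Lambda_0 < |\ell| \le \Lambda_1$ (by the matrix model definition).

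For Case~1 (all $p_i$ vanish), every $c_i(k)$ is of growth $\Lambda_0$. Given $j \ge 0$ and $\epsilon > 0$, I would invoke Lemma~\ref{le_sidestep_one} with $r$ chosen very large as a function of $j$, $\epsilon$, $\Lambda_0$, $\Lambda_1$. Because all polyexponential parts up through $i = r-1$ vanish, the Exceptional Eigenvalue Bound collapses to a Markov-type estimate whose dominant remainder is $n^{-r}$ times the $O(\Lambda_1^k)$ tail; balancing against $(\Lambda_0+\epsilon)^k$ at $k \propto \log n$ yields an exceptional-eigenvalue bound of order $O(n^{-j})$ once $r$ is large enough, giving \eqref{eq_largest_j}.

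For Case~2, I would apply Lemma~\ref{le_sidestep_two} with order $r = j+1$ (or slightly larger, to control the remainder) to obtain both \eqref{eq_mostly_near_Lambda_0_or_Ls} and, for each $\ell \in L = L_{j+1}$, an asymptotic $\EE{\rm in}_{\cM_n}[B_{n^{-\theta}}(\ell)] = n^{-j} C_\ell + o(n^{-j})$ with some real $C_\ell \ge 0$. To deduce \eqref{eq_thm_p_j_pure_exp} I would substitute these counts back into the expected trace, partitioning $\sum_i \lambda_i(M)^k$ into three regions. Each eigenvalue close to some $\ell \in L$ is necessarily real (being outside $B_{\Lambda_0}(0)$, hence in $[-\Lambda_1,\Lambda_1]$), so its $k$-th power equals $\ell^k \bigl(1 + O(n^{-\theta})\bigr)^k = \ell^k \bigl(1 + o(1)\bigr)$ throughout the relevant window $k = O(\log n)$. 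Consequently, the $\ell$-ball contributes $\bigl(C_\ell \ell^k + o(\ell^k)\bigr)/n^j$ to $\EE[\Trace(M^k)]$, the central ball $B_{\Lambda_0+\epsilon}(0)$ contributes a function of growth $\Lambda_0+\epsilon$, and the exceptional region contributes at most $o(n^{-j})\Lambda_1^k$. Matching the $1/n^j$-coefficient of this three-region decomposition against $c_j(k) = p_j(k) + q_j(k)$, and using the uniqueness of the polyexponential part modulo growth-$\Lambda_0$ functions, forces $p_j(k) = \sum_{\ell \in L} C_\ell \ell^k$. Strict positivity $C_\ell > 0$ then follows from $C_\ell \ge 0$ combined with the fact that $\ell \in L$ prevents the $\ell$-coefficient of $p_j$ from vanishing.

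The principal obstacle is Lemma~\ref{le_sidestep_two} itself, where the shift-operator technology is deployed to isolate a single piece of the expansion; once that lemma (together with Lemma~\ref{le_sidestep_one}) is in hand, the theorem follows by the bookkeeping and polyexponential-uniqueness matching argument described above.
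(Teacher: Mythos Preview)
Your overall plan---deduce the theorem from Lemmas~\ref{le_sidestep_one} and~\ref{le_sidestep_two}---is exactly the paper's approach, and your Case~1 argument via Lemma~\ref{le_sidestep_one} with $L_r=\emptyset$ is a clean alternative to the paper's use of \eqref{eq_most_useful}.

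However, in Case~2 you are working harder than necessary and introducing a gap. Lemma~\ref{le_sidestep_two} already delivers the pure-exponential form: its conclusion \eqref{eq_q_ell_j_formula} says $q_{\ell,j}(k)=C_\ell$ is constant whenever $|\ell|\ge\Lambda_0+2\epsilon/3$, which immediately gives \eqref{eq_thm_p_j_pure_exp}. The paper's proof therefore consists of (i) choosing $\epsilon_0>0$ small enough that every base of $p_j$ satisfies $|\ell|\ge\Lambda_0+2\epsilon_0/3$ and $\Lambda_0+\epsilon_0\le\Lambda_1$, and (ii) reading off \eqref{eq_exception_for_side_stepping}, \eqref{eq_C_ell_alternate_formula}, \eqref{eq_q_ell_j_formula} from the lemma. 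You omit step~(i), without which the lemma does not apply to all $\ell\in L_{j+1}$; and you also write ``$r=j+1$'' where the lemma actually requires $r\ge r_1(\Lambda_0,\Lambda_1,j,\epsilon)$, which may be much larger (harmless here since expansions of every order are assumed, but worth stating correctly).

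Your proposed substitute---the three-region matching argument---is not rigorous as written. The phrase ``matching the $1/n^j$-coefficient'' presumes the decomposition has a $1/n$-power-series structure, but the central-ball contribution is only known to be bounded by $n(\Lambda_0+\epsilon)^k$; it has no intrinsic $1/n$-expansion to match against. One can salvage the idea by working along sequences $k=k(n)\sim\kappa\log n$ for varying $\kappa$ (this is precisely how the paper proves \eqref{eq_q_ell_j_formula} inside Lemma~\ref{le_sidestep_two}), but then you are re-deriving part of the lemma rather than using it. Once you invoke \eqref{eq_q_ell_j_formula} directly, your positivity argument ($C_\ell\ge 0$ from \eqref{eq_C_ell_alternate_formula}, combined with $q_{\ell,j}=C_\ell\ne 0$ since $\ell$ is a base of $p_j$) is exactly right and completes the proof.
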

In the above theorem, $L=L_{j+1}$ as in Definition~\ref{de_matrix_model},
and this definition implies that
$$
L\ \subset
\ [-\Lambda_1,\Lambda_1] \setminus
[-\Lambda_0,\Lambda_0].
$$
We remark that in our applications $p_i(k)\ne 0$ for some $i$;
the theorem then states that the smallest $j$ 
for which $p_j(k)\ne 0$ corresponds to the power of $1/n$ such that
eigenvalues begin to appear with probability order $n^{-j}$
near the bases of $p_j(k)$.

\subsection{The Exceptional Eigenvalue Bound}

The first step in proving the Sidestepping Theorem is what we
call the {\em Exceptional Eigenvalue Bound}; this was called the
{\em First Exception Bound} in Section~2.6 of \cite{friedman_kohler}
(see Subsection~2.6.3 there).
It is interesting in its own right.

\begin{lemma}\label{le_sidestep_one}
Let $\epsilon,\alpha,\Lambda_0,\Lambda_1>0$ be positive real numbers
with $\Lambda_0<\Lambda_1$.
Then there is
a positive integer $r_0=r_0(\Lambda_0,\Lambda_1,\epsilon,\alpha)$ 
and a real
$\theta_0=\theta_0(\Lambda_0,\Lambda_1,\epsilon,\alpha)>0$
with the following property:
let $\{\cM_n\}_{n\in N}$ be a $(\Lambda_0,\Lambda_1)$-bounded matrix model
that has an order $r$ expansion \eqref{eq_matrix_model_exp} 
for some $r\ge r_0$, and let $L_r$ be the set of larger bases of 
$c_0(k),\ldots,c_{r-1}(k)$.
Then for any $\theta\le\theta_0$,
\begin{equation}\label{eq_sidestep_r_one}
\EE{\rm out}_{\cM_n}
\bigl[ B_{\Lambda_0+\epsilon}(0) \cup B_{n^{-\theta}}(L_r) \bigr] 
\le n^{-\alpha} 
\end{equation} 
for $n$ sufficiently large.
\end{lemma}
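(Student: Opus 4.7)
The plan is to use the shift operator $(Sf)(k) = f(k+1)$ together with the identity
\begin{equation*}
T(S)\bigl(\Trace(M^{k})\bigr) \;=\; \Trace\bigl(M^{k}\, T(M)\bigr) \;=\; \sum_{j=1}^{n} T(\lambda_j)\, \lambda_j^{k},
\end{equation*}
valid for any polynomial $T\in\complex[x]$ and any $n\times n$ matrix $M$ with eigenvalues $\lambda_1,\ldots,\lambda_n$ counted with algebraic multiplicity over $\complex$ (this identity follows by Jordan triangularization).

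First, I will construct an annihilator: choose $D$ large enough that $P(S):=\prod_{\ell\in L_r}(S-\ell)^{D}$ kills the polyexponential part of each $c_i(k)$ for $i=0,\ldots,r-1$, and set $Q(S):=P(S)^{2}$. Since $L_r\subset[-\Lambda_1,\Lambda_1]$ is real, $P$ has real coefficients, and the crucial gain of $Q$ over $P$ is that $Q(\lambda)\ge 0$ for every real $\lambda$. Applying $Q(S)$ to both sides of \eqref{eq_matrix_model_exp}, the left-hand side becomes $\EE_{M\in\cM_n}\bigl[\sum_{j}P(\lambda_j(M))^{2}\lambda_j(M)^{k}\bigr]$; on the right, each $Q(S)c_i(k)$ with $i<r$ is $Q(S)$ acting only on the $\Lambda_0$-growth error part of $c_i$ (its polyexponential part having been annihilated), hence is of growth $\Lambda_0$, while $Q(S)\bigl(O(c_r(k))\bigr)$ has growth $\Lambda_1$ by pointwise-shift bookkeeping. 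This yields, for any fixed $\delta>0$ and any $k$ in the valid range,
\begin{equation*}
\Bigl|\EE_{M\in\cM_n}\Bigl[\sum_{j=1}^{n} P(\lambda_j(M))^{2}\lambda_j(M)^{k}\Bigr]\Bigr|\;\le\; A_0(\Lambda_0+\delta)^{k} + A_1(\Lambda_1+\delta)^{k}/n^{r}.
\end{equation*}

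For a matching lower bound I will take $k$ even and split the eigenvalues of $M$ into three classes. The non-real eigenvalues all lie in $B_{\Lambda_0}(0)$, so their total contribution has absolute value at most $nA_P\Lambda_0^{k}$ where $A_P:=\max_{|z|\le\Lambda_0}|P(z)|^{2}$. The real eigenvalues in $B_{\Lambda_0+\epsilon}(0)\cup B_{n^{-\theta}}(L_r)$ contribute non-negatively, since $P(\lambda)^{2}\lambda^{k}\ge 0$ for real $\lambda$ and even $k$. Finally, each \emph{exceptional} real eigenvalue $\lambda$---one outside $B_{\Lambda_0+\epsilon}(0)\cup B_{n^{-\theta}}(L_r)$---satisfies $|\lambda|>\Lambda_0+\epsilon$ and $|\lambda-\ell|\ge n^{-\theta}$ for every $\ell\in L_r$, so $P(\lambda)^{2}\ge n^{-2\theta d}$ with $d=\deg P$ and $\lambda^{k}\ge(\Lambda_0+\epsilon)^{k}$. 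Writing $E$ for the quantity $\EE{\rm out}_{\cM_n}\bigl[B_{\Lambda_0+\epsilon}(0)\cup B_{n^{-\theta}}(L_r)\bigr]$ that we wish to bound, combining the two sides gives
\begin{equation*}
E\cdot n^{-2\theta d}(\Lambda_0+\epsilon)^{k}\;\le\; A_0(\Lambda_0+\delta)^{k} + A_1(\Lambda_1+\delta)^{k}/n^{r} + nA_P\Lambda_0^{k}.
\end{equation*}

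The final step is to choose $k=\lfloor a\log n\rfloor$ (rounded to an even integer) and tune the parameters $\delta,a,r,\theta$. Taking $\delta$ small, I will pick $a$ large enough that each of the ratios $((\Lambda_0+\delta)/(\Lambda_0+\epsilon))^{k}$ and $(\Lambda_0/(\Lambda_0+\epsilon))^{k}$ dominates $n^{\alpha+2\theta d+1}$; take $r\ge r_0$ large enough that $n^{-r}((\Lambda_1+\delta)/(\Lambda_0+\epsilon))^{k}$ is similarly at most $n^{-\alpha-2\theta d}$; and finally take $\theta\le\theta_0$ small enough that the prefactor $n^{2\theta d}$ does not undo the exponential decay, obtaining $E\le n^{-\alpha}$. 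The hard part will be this three-way optimization, most subtly because the degree $d=\deg P$ is dictated by $L_r$ and by the polynomial-part degrees inside the $c_i$: $\theta_0$ must be chosen small enough relative to $d$ that the $2\theta d$ penalty is absorbed into the slack between $\Lambda_0+\epsilon$ and $\Lambda_0$, and $r_0$ must be large enough to kill the remaining $n^{2\theta d}$ factor against the $(\Lambda_1+\delta)/(\Lambda_0+\epsilon)$ growth. A secondary technical point is controlling the implicit constant in $O(c_r(k))/n^{r}$ after applying $Q(S)$ uniformly in $k$ and $n$, which reduces to bounding a finite linear combination of pointwise shifts of a function of growth $\Lambda_1$.
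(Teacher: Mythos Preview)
Your approach is essentially identical to the paper's: the paper applies the annihilator $\Ann_{D,L_r}(S)=\prod_{\ell\in L_r}(S-\ell)^D$ with $D$ \emph{even} to secure $\Ann_{D,L_r}(\lambda)\ge 0$ for real $\lambda$ (equivalent to your $Q=P^2$, since $L_r\subset\reals$), separates the non-real eigenvalues in $B_{\Lambda_0}(0)$ exactly as you do, and then takes $k$ to be an even integer near $\kappa\log n$. The paper handles your flagged worry about $\theta_0$ versus $d=\deg P$ by first choosing $\kappa$ and $r$ so that the required inequality is strict at $\theta=\delta=0$ (where $d$ drops out entirely) and then invoking continuity in $\theta,\delta$ to get positive values---the same maneuver works verbatim in your argument.
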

In other words, an eigenvalue of $M\in\cM_n$ that lies outside of
$B_{\Lambda_0+\epsilon}(0) \cup B_{n^{-\theta}}(L_r)$ is 
``exceptional'' in the sense that the expected number
of such eigenvalues decays larger than any fixed power of $n$
(for $\epsilon,\theta>0$ sufficiently small),
for models with expansions of sufficiently large
order $r$.

Of course, if \eqref{eq_sidestep_r_one} holds for $\theta,r$, then
it holds for any larger $r$ and smaller $\theta>0$ since 
$B_{n^{-\theta}}(L_r)$ is non-decreasing as $r$ increases and
$\theta$ decreases.

We remark that to prove the above lemma we will take 
$r_0=r_0(\Lambda_0,\Lambda_1,\epsilon,\alpha)$ to be any integer such
that
\begin{equation}\label{eq_give_r_zero}
r_0 >
\alpha+ (\alpha+1)
\frac{\log(\Lambda_1) - \log(\Lambda_0+\epsilon)}{
\log(\Lambda_0+\epsilon) - \log(\Lambda_0)} 
\end{equation}
(see \eqref{eq_first_ex_kappa} and \eqref{eq_first_ex_r} in the
next Section).
Hence for $\Lambda_0,\Lambda_1,\alpha$ fixed and $\epsilon$ small, which
is the case in our applications, we
have that $r_0$ is proportional to $1/\epsilon$, in view of the denominator
in the fraction in \eqref{eq_give_r_zero}.
Hence for small $\epsilon$ we need an asymptotic expansion of order
proportional to $1/\epsilon$ to be able to apply
the above lemma.

\subsection{The Sidestepping Lemma}

We now state the main lemma of this article, which is a version of the
Sidestepping Lemma of \cite{friedman_alon} that is more general and
easier to apply in our situation.
It easily implies Theorem~\ref{th_sidestep}, which is easier to state
and sufficient for us in Articles~V and~VI.

\begin{lemma}\label{le_sidestep_two}
Let $\Lambda_0<\Lambda_1$ and $\epsilon$ be positive real numbers
with $\Lambda_0+\epsilon\le \Lambda_1$,
and let $j\ge 0$ be an integer.
Then there are $r_1=r_1(\Lambda_0,\Lambda_1,j,\epsilon)$ 
and $\theta_1=\theta_1(\Lambda_0,\Lambda_1,j,\epsilon)$
with the following properties:
let $\{\cM_n\}_{n\in N}$ be a $(\Lambda_0,\Lambda_1)$-bounded matrix model
that has an order $r$ expansion \eqref{eq_matrix_model_exp} for 
some $r\ge r_1(\Lambda_0,\Lambda_1,j,\epsilon)$.
Recall that 
for each $i<r$, $L_i$ denotes the larger bases of
$c_0(k),\ldots,c_{i-1}(k)$ (with respect to $\Lambda_0$).
If $L_j=\emptyset$,
then
\begin{equation}\label{eq_most_useful}
\EE{\rm out}_{\cM_n}[B_{\Lambda_0+\epsilon}(0)] = O(n^{-j}).
\end{equation}
In more detail, for any positive $\theta\le\theta_1$ we have
\begin{equation}\label{eq_exception_for_side_stepping}
\EE{\rm out}_{\cM_n}
\bigl[ B_{\Lambda_0+\epsilon}(0)\cup B_{n^{-\theta}}(L_{j+1}) \bigr]
= o(n^{-j}).
\end{equation} 
Furthermore, 
for each $\ell\in L_{j+1}$ with $|\ell|\ge \Lambda_0+2\epsilon/3$
we have
\begin{equation}\label{eq_C_ell_alternate_formula}
\EE{\rm in}_{\cM_n}\bigl[ B_{n^{-\theta}}(\ell) \bigr] 
= C_\ell n^{-j} + o(n^{-j}),
\end{equation}
for some real $C_\ell\ge 0$.
Furthermore, if $q_{\ell,j}=q_{\ell,j}(k)$ are the polynomials in $k$
such that
$$
p_j(k) = \sum_{\ell\in L_{j+1}} q_{\ell,j}(k)\ell^k
$$
is 
the polyexponential part of $c_j(k)$ (with respect to 
$\Lambda_0$), then if $|\ell|\ge \Lambda_0 + 2\epsilon/3$ we have
\begin{equation}\label{eq_q_ell_j_formula}
q_{\ell,j}(k) = C_\ell
\end{equation} 
is a constant function in $k$.
\end{lemma}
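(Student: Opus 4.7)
The plan is to combine the Exceptional Eigenvalue Bound (Lemma~\ref{le_sidestep_one}) with the shift-operator technique described in the introduction, in order to isolate and compute the expected number of eigenvalues near each larger base. First I would invoke Lemma~\ref{le_sidestep_one} with $\alpha = j+1$ and some $\epsilon' \in (0,\epsilon/3]$ to choose $r_0$ and $\theta_0 > 0$ so that for any $r' \ge r_0$ and $\theta \le \theta_0$,
$$
\EE{\rm out}_{\cM_n}\bigl[B_{\Lambda_0+\epsilon'}(0) \cup B_{n^{-\theta}}(L_{r'})\bigr] \le n^{-j-1}.
$$
Since any $\ell \in L_{r'}$ with $|\ell| < \Lambda_0 + 2\epsilon/3$ has $B_{n^{-\theta}}(\ell) \subset B_{\Lambda_0+\epsilon}(0)$ for $n$ large, it suffices to control, for each element of the finite set $L^{\sharp} := \{\ell \in L_{r'} : |\ell| \ge \Lambda_0 + 2\epsilon/3\}$, the random variable $N_\ell(M) := \#\{i : \lambda_i(M) \in B_{n^{-\theta}}(\ell)\}$; the balls involved are pairwise disjoint for $n$ large.

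Next, for each $\ell \in L^{\sharp}$, I would construct the annihilator polynomial
$$
P_\ell(z) := \prod_{\ell' \in L_{r'} \setminus \{\ell\}}(z - \ell')^{D},
$$
with $D$ chosen larger than the maximum degree of any polynomial factor $q_{\ell',i}(k)$ appearing in the polyexponential parts of $c_0,\ldots,c_{r'-1}$. Using the identities $P_\ell(S)[q(k)\,\mu^k] = 0$ for $\mu \in L_{r'} \setminus \{\ell\}$ and $P_\ell(S)[q(k)\,\ell^k] = \tilde q(k)\,\ell^k$ with $\deg \tilde q = \deg q$ (since $P_\ell(\ell)\ne 0$), together with the fact that $P_\ell(S)$ preserves the growth classes $\Lambda_0$ and $\Lambda_1$, applying $P_\ell(S)$ to \eqref{eq_matrix_model_exp} yields
$$
\EE_{M \in \cM_n}\bigl[P_\ell(S)\Trace(M^k)\bigr] = \sum_{i\,:\,\ell \in L_{i+1}} \frac{\tilde q_{\ell,i}(k)\,\ell^k}{n^i} + O\bigl((\Lambda_0 + \eta)^k\bigr) + O\bigl(\Lambda_1^k\bigr)/n^{r'}.
$$
Under $L_j = \emptyset$, the smallest index $i_\ell$ for which $\ell$ appears is $j$ when $\ell \in L_{j+1}$ and strictly greater than $j$ otherwise.

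The other side reads $P_\ell(S)\Trace(M^k) = \sum_i P_\ell(\lambda_i(M))\,\lambda_i(M)^k$, which I would split according to whether $\lambda_i$ lies in $B_{n^{-\theta}}(\ell)$, in $B_{n^{-\theta}}(\ell')$ for $\ell' \in L^{\sharp}\setminus\{\ell\}$, in $B_{\Lambda_0+\epsilon'}(0)$, or in the exceptional set. The first group contributes $P_\ell(\ell)\,\ell^k\,N_\ell(M)\,(1 + O(kn^{-\theta}))$ by Taylor expansion of $z\mapsto P_\ell(z)z^k$ around $\ell$, using $|\lambda|^k = |\ell|^k(1 + o(1))$ for $k\asymp\log n$; the second is bounded per eigenvalue by $O(n^{-\theta D}\Lambda_1^k)$ since $P_\ell$ vanishes to order $D$ at $\ell'$; the third contributes at most $O(n(\Lambda_0+\epsilon')^k)$; and the expected exceptional contribution is at most $\Lambda_1^k$ times the expected exceptional count, which can be driven below $n^{-\alpha'}\Lambda_1^k$ for any fixed $\alpha'$ by invoking Lemma~\ref{le_sidestep_one} with larger $\alpha$ (and correspondingly larger $r'$). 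Choosing $k \asymp \log n$ with a constant lying in the window determined by $r'$ and $|\ell| \ge \Lambda_0 + 2\epsilon/3$ makes every additive error $o(\ell^k/n^j)$; equating both sides, absorbing the $(1+o(1))$ multiplicative factor on the main term, and dividing through by $P_\ell(\ell)\,\ell^k$ yields
$$
\EE[N_\ell] = \frac{\tilde q_{\ell,j}(k)}{P_\ell(\ell)\,n^j} + o(n^{-j})
$$
for $\ell \in L^{\sharp}\cap L_{j+1}$, and $\EE[N_\ell] = o(n^{-j})$ for $\ell \in L^{\sharp}\setminus L_{j+1}$. Summing the latter case with the exception bound gives \eqref{eq_exception_for_side_stepping}; summing all contributions gives \eqref{eq_most_useful}.

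The main obstacle, which I would address last, is the constancy claim \eqref{eq_q_ell_j_formula}. Since $\EE[N_\ell]$ does not depend on $k$ while the right-hand side of the displayed formula is a polynomial in $k$ divided by $n^j$, evaluating at two values $k_1, k_2$ in the admissible window of length $\Theta(\log n)$ forces $\tilde q_{\ell,j}(k_1) - \tilde q_{\ell,j}(k_2) = o(1)$; were $\tilde q_{\ell,j}$ a polynomial of positive degree, this difference would grow like a positive power of $\log n$, a contradiction. Hence $\tilde q_{\ell,j}$ is a constant, and since $P_\ell(S)$ preserves polynomial degree on $k\mapsto q(k)\ell^k$ when $P_\ell(\ell)\ne 0$, the original $q_{\ell,j}$ is constant as well; writing $q_{\ell,j}\equiv C_\ell$ and observing $\tilde q_{\ell,j} = P_\ell(\ell)\,C_\ell$ yields both \eqref{eq_q_ell_j_formula} and \eqref{eq_C_ell_alternate_formula}, with $C_\ell \ge 0$ automatic from $N_\ell \ge 0$.
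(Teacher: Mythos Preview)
Your proposal is correct and takes essentially the same approach as the paper: apply the annihilator $P_\ell=\Ann_{D,\,L_{r'}\setminus\{\ell\}}(S)$ to the expansion, split the trace by eigenvalue location (the paper packages this splitting into Lemmas~\ref{le_real_eigs_bound_with_ell} and~\ref{le_control_expectation}, and separates real from non-real eigenvalues where you simply absorb the non-real ones into the $B_{\Lambda_0+\epsilon'}(0)$ group), take $k$ proportional to $\log n$, and deduce constancy of $q_{\ell,j}$ by comparing two admissible choices of $k$ (the paper phrases this as two values of the slope $\kappa$, which is the same device). The only expository wrinkle is that your opening choice $\alpha=j+1$ is too small for the exceptional term $\Lambda_1^k\cdot\EE{\rm out}[\cdots]$ to be $o(\ell^k n^{-j})$---you patch this mid-proof by re-invoking Lemma~\ref{le_sidestep_one} with larger $\alpha$ and $r'$, whereas the paper avoids the backtracking by setting $\widetilde\alpha=j+1+\kappa\log(\Lambda_1/(\Lambda_0+2\epsilon/3))$ from the outset; similarly, $D$ must be taken large enough not only to annihilate the polyexponential parts but also so that $n^{1-\theta D}\Lambda_1^k=o(\ell^k n^{-j})$, a constraint you use implicitly but do not state.
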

It other words, if $c_0(k),\ldots,c_{j-1}(k)$ are of growth
$\Lambda_0$, then we know the 
``expected location'' of the eigenvalues of $M\in\cM_n$ 
``to order $n^{-j}$'' in terms of the polyexponential part, $p_j(k)$, of 
$c_j(k)$ with respect to $\Lambda_0$ (this requires
that $\epsilon$ above
is taken small enough so that all the larger bases 
(with respect to $\Lambda_0$) of $c_j$ are
larger than $\Lambda_0+2\epsilon/3$);
furthermore, this $p_j(k)$ is a linear combination of 
purely exponential
functions of $k$, and the coefficients in this linear combination reflects
the excepted number (times $n^{-j}$)
of eigenvalues near each base of $p_j(k)$.
Of course, it is possible that $p_j(k)$ is identically zero, i.e.,
also $L_{j+1}$ is empty;
in such a situation the hypotheses of the
Sidestepping Lemma also hold when $j$ is replaced
with $j+1$, and we get more precise results provided that
$\cM_n$ has expansions of sufficiently large order.

We remark that in the above lemma,
if $\theta\le 0$, then the sets $B_{n^{-\theta}}(\ell)$ 
may contain elements of $L_{j+1}$ other than $\ell$; for this reason
we generally need $\theta>0$ in order for \eqref{eq_C_ell_alternate_formula}
to hold.

We remark that in our applications, our models $\cM_n$ can be assumed
to have expansions of arbitrarily high order, 
and so the particular value of $r_1(\Lambda_0,\Lambda_1,j,\epsilon)$
above is unimportant.

\section{The Shift Operator and The Proof of Lemma~\ref{le_sidestep_one}}
\label{se_r0}

In this section we develop some simple properties of the shift operator.
Then we separate $\Trace(M^k)$ into the contribution given by its
real and non-real eigenvalues, and develop some fundamental facts
about the $M\in\cM_n$-expected contribution due to the real
eigenvalues.
This allows us to prove Lemma~\ref{le_sidestep_one}, in a way that
we will easily adapt in Section~\ref{se_r1} to prove 
Lemma~\ref{le_sidestep_two}.
The main idea in both proofs, just as in
\cite{friedman_alon,friedman_kohler},
is to apply polynomials of the {\em shift in $k$} operator
to both sides of \eqref{eq_matrix_model_exp}.
These polynomials of the shift operator will be used to annihilate
the polyexponential parts of the coefficients of the asymptotic
expansion; one simply needs to keep track of 
how these shift-operator polynomials will affect the expected value
of $\Trace(M^k)$ and the other terms in \eqref{eq_matrix_model_exp}.

\subsection{The Shift Operator}

\begin{definition}
By the {\em shift operator in $k$}, denoted $S=S_k$, we mean
the operator
on functions, \(f=f(k)\), defined on non-negative integers, \( k \), taking
real or complex values, that takes
\(f\) to $Sf$ given by
\[ 
(Sf)(k) = f(k+1). 
\] 
For any integer \( i \geq 0 \) we let
\( S^i \) denote the \(i\)-fold application of \(S\), so that 
\[ (S^i
f)(k) = f(k+i). \] 
If $Q(z) = q_0 + q_1 z + \cdots + q_t z^t$ is any polynomial,
with real or complex coefficients, we define
$$
Q(S) = q_0 S^0 + q_1 S^1 + \cdots +
q_t S^t \ .
$$
If $f=f(k,n)$ depends on two variables, $k$ and $n$, we apply
$S$ and $S^i$ to $f(k,n)$ understanding that the shift is applied to
the first variable, with the second variable, $n$, fixed.
\end{definition}
Throughout this paper, $S=S_k$, will always be used to shift in $k$;
hence we never write the subscript $k$.

We remark that we often write formulas such as
\begin{equation}\label{eq_abusive}
S(\mu^k) = \mu\, \mu^k
\end{equation} 
with $k\in\naturals$ being a specific value; it would be more correct to write
\begin{equation}\label{eq_tedious}
\bigl( S(\mu^k) \bigr) \bigm|_{k=k_0} = \mu\,\mu^{k_0},
\end{equation} 
for a specific $k_0\in\naturals$, because the $\mu^k$ in $S(\mu^k)$
refers to a function of $k$.  However, 
we generally use \eqref{eq_abusive} for a particular value of $k$
rather than the more tedious \eqref{eq_tedious}, unless confusion is
likely.

There are a number of easy and useful results on 
the polynomials of shift operator, which we leave to the reader:
\begin{enumerate}
\item if $Q(z)=Q_1(z)Q_2(z)$ are polynomials,
then $Q(S)=Q_1(S)Q_2(S)=Q_2(S)Q_1(S)$;
\item if $Q(z) = q_0 + q_1 z + \cdots + q_t z^t$, then
\begin{equation}\label{eq_apply_shift_exponential}
\bigl( Q(S) \bigr)(\mu^k) = Q(\mu) \mu^k 
\end{equation} 
(where $\mu^k$ refers to this function of $k$);
\item if $p=p(k)$ is any polynomial, then
\begin{equation}\label{eq_annihilate_p_mu}
(S-\mu)^D \bigl( p(k)\mu^k \bigr) = 0 \quad\mbox{if}\quad
D> \deg(p).
\end{equation} 
\end{enumerate}
We will be interested in one particular type of polynomial.

\begin{definition}
\label{de_D_L_annihilator}
Let $D\in\integers$ and $L\subset\complex$ be a finite set.  We define the
{\em annihilator of $L$ of degree $D$} to be the polynomial
$$
\Ann_{D,L}(z) \eqdef  \prod_{\ell\in L} (z-\ell)^D.
$$
\end{definition}

We easily verify the following claims:
\begin{enumerate}
\item If $p(k)$ is any polyexponential function
whose bases lie in a finite set $L\subset\complex$, then 
$\Ann_{D,L}(S)p=0$ for $D\in\naturals$ sufficiently large
(in view of \eqref{eq_annihilate_p_mu}).
\item If $f(k)$ is a function of growth $\rho$, then $Sf$ is also of
growth $\rho$, since for any fixed $\epsilon>0$, for $k$ sufficiently
large we have
\begin{equation}\label{eq_Sf_also_growth_rho}
|f(k+1)|<(\rho+\epsilon/2)^{k+1} \le (\rho+\epsilon/2)^k(\rho+\epsilon/2)
\le (\rho+\epsilon)^k
\end{equation} 
for $k$ sufficiently large;
hence for any fixed $i\in\naturals$, 
$S^i f$ is also of growth $\rho$; hence 
for any fixed polynomial, $Q$,
$Q(S)f$ is also of growth $\rho$.
\item If $L\subset\complex$ is a finite set that is
closed under conjugation, and $\mu\in\reals$,
then $\Ann_{1,L}(\mu)$ is real-valued, since
$$
\overline{\Ann_{1,L}(\mu)} = \prod_{\ell\in L} \overline{\mu-\ell}
= \prod_{\ell\in L} \bigl(\mu-\overline\ell\bigr) 
= \prod_{\ell\in L} \bigl(\mu-\ell\bigr) = \Ann_{1,L}(\mu);
$$
if, in addition, $D\in\naturals$ is even then we have
$$
\Ann_{D,L}(\mu) = \bigl( \Ann_{1,L}(\mu) \bigr)^D \ge 0;
$$
in particular, under these assumptions
\begin{equation}\label{eq_positivity}
\Ann_{D,L}(\mu^k) = \Ann_{D,L}(\mu) \,\mu^k \ge 0 
\end{equation}
when evaluated at any even integer $k$;
this positivity will be crucial to our proof of 
Lemma~\ref{le_sidestep_one}.
\end{enumerate}

\subsection{Real Eigenvalue Bounds}

For each $M\in\cM_n$, let
\begin{equation}\label{eq_real_non_real}
{\rm RealTrace}(M,k), \quad
{\rm NonRealTrace}(M,k)
\end{equation}
respectively denote the sum of the $k$-powers of the real and, respectively,
non-real eigenvalues of $M$.
Our proof of \eqref{eq_sidestep_r_one} is based on the following
two conceptual steps which we state as lemmas;
both lemmas will be proven in later subsections.

\begin{lemma}\label{le_real_eigs_bound}
Let $\{\cM_n\}_{n\in N}$ be a $(\Lambda_0,\Lambda_1)$-bounded matrix model
that has an order $r$ expansion with range $K(n)$
(\eqref{eq_matrix_model_exp}, Definition~\ref{de_matrix_model}).
Let $L\subset\complex$
be any finite set including all the
bases of $c_0(k),\ldots,c_{r-1}(k)$ in an expansion
\eqref{eq_matrix_model_exp}.
For any $D$ such that $\Ann_{D,L}$ annihilates the
polyexponential parts of $c_i(k)$ with respect to $\Lambda_0$
(therefore any sufficiently large $D$), we have
that for all $n\in N$ and integers $k$ with $0\le k\le K(n)-D(\#L)$.
\begin{equation}\label{eq_le_real_trace_bound}
\biggl|
\Ann_{D,L}(S) \Bigl( \EE_{M\in\cM_n}\bigl[ {\rm RealTrace}(M,k) \bigr]  \Bigr)
\biggr|
\le f_0(k)n+f_1(k)n^{-r},
\end{equation} 
where $f_0,f_1$ (depending on $D,L,r$)
are functions of respective growths $\Lambda_0,\Lambda_1$.
\end{lemma}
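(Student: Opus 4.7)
The strategy is to apply $\Ann_{D,L}(S)$ to both sides of \eqref{eq_matrix_model_exp} and then to split $\Trace(M^k) = {\rm RealTrace}(M,k) + {\rm NonRealTrace}(M,k)$, bounding the non-real contribution directly using the fact (from Definition~\ref{de_matrix_model}) that all non-real eigenvalues of $M\in\cM_n$ lie in $B_{\Lambda_0}(0)$.

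First I would apply $\Ann_{D,L}(S)$ to \eqref{eq_matrix_model_exp} term by term. By hypothesis $\Ann_{D,L}$ annihilates the polyexponential part of each $c_i(k)$ (for $i<r$), leaving only the growth-$\Lambda_0$ residual; and the iterated observation \eqref{eq_Sf_also_growth_rho} shows that any polynomial $Q(S)$ preserves growth $\rho$, so $\Ann_{D,L}(S)c_i(k)$ has growth $\Lambda_0$ for each $i<r$, while $\Ann_{D,L}(S)c_r(k)$ has growth $\Lambda_1$. The shift $S^j$ with $j\le D(\#L)$ evaluates the expansion at arguments up to $k+D(\#L)$, which explains the range restriction $k\le K(n)-D(\#L)$. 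Absorbing the $g_i(k)/n^i$ for $0\le i<r$ into a single growth-$\Lambda_0$ function (legitimate since $n\ge 1$) yields
\begin{equation*}
\bigl|\Ann_{D,L}(S)\bigl(\EE_{M\in\cM_n}[\Trace(M^k)]\bigr)\bigr| \le \widetilde f_0(k) + \widetilde f_1(k)\,n^{-r}
\end{equation*}
with $\widetilde f_0,\widetilde f_1$ of growths $\Lambda_0,\Lambda_1$ respectively.

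Next, for the non-real contribution, let $\lambda_1,\ldots,\lambda_m$ (with $m\le n$) be the non-real eigenvalues of $M$, so that $|\lambda_j|\le\Lambda_0$ for each $j$ by Definition~\ref{de_matrix_model}. Using \eqref{eq_apply_shift_exponential} termwise,
\begin{equation*}
\Ann_{D,L}(S)\bigl({\rm NonRealTrace}(M,k)\bigr) = \sum_{j=1}^{m}\Ann_{D,L}(\lambda_j)\,\lambda_j^k;
\end{equation*}
since $L$ is a fixed finite set, $|\Ann_{D,L}(\lambda_j)| \le \prod_{\ell\in L}(\Lambda_0+|\ell|)^D$ is bounded by a constant $C=C(D,L)$, so the above is bounded in absolute value by $nC\Lambda_0^k$, i.e., $n$ times a function of growth $\Lambda_0$. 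Taking expectation and subtracting via
\begin{equation*}
\Ann_{D,L}(S)\bigl(\EE[{\rm RealTrace}(M,k)]\bigr) = \Ann_{D,L}(S)\bigl(\EE[\Trace(M^k)]\bigr) - \Ann_{D,L}(S)\bigl(\EE[{\rm NonRealTrace}(M,k)]\bigr),
\end{equation*}
the two bounds combine (using $\widetilde f_0(k)\le \widetilde f_0(k)\cdot n$) to give $|\Ann_{D,L}(S)(\EE[{\rm RealTrace}(M,k)])|\le f_0(k)n+f_1(k)n^{-r}$, with $f_0$ of growth $\Lambda_0$ (absorbing both $\widetilde f_0$ and the non-real bound) and $f_1$ of growth $\Lambda_1$.

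The main step to verify carefully is the growth-preservation property of $\Ann_{D,L}(S)$, together with the bookkeeping on the shifted range $k\le K(n)-D(\#L)$; everything else is elementary manipulation. The conceptual point is simple: non-real eigenvalues are automatically confined to $B_{\Lambda_0}(0)$ by the model's definition, so their contribution to $\Ann_{D,L}(S)(\Trace)$ is intrinsically growth-$\Lambda_0$, even after the inflation by the factor of $n$ coming from summing over up to $n$ non-real eigenvalues; this is exactly why the bound takes the asymmetric form $f_0(k)n+f_1(k)n^{-r}$ rather than a bound with a single growth type.
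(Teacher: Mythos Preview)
Your proposal is correct and follows essentially the same approach as the paper: apply $\Ann_{D,L}(S)$ to the expansion \eqref{eq_matrix_model_exp}, use annihilation of the polyexponential parts plus growth-preservation to bound the right-hand side by a growth-$\Lambda_0$ term plus $n^{-r}$ times a growth-$\Lambda_1$ term, then bound the non-real contribution by $nC\Lambda_0^k$ using $|\lambda|\le\Lambda_0$ for non-real eigenvalues, and subtract. The paper merely packages the three pieces as separate intermediate lemmas (Lemmas~\ref{le_c_i}--\ref{le_nonreal_eigs_bound}), whereas you do everything inline; one small point to tighten is that the error term in \eqref{eq_matrix_model_exp} is $O(c_r(k))/n^r$, a function of both $k$ and $n$, so applying $\Ann_{D,L}(S)$ requires the triangle-inequality argument of Lemma~\ref{le_ann_c_r} rather than literally computing $\Ann_{D,L}(S)c_r(k)$, but this is exactly the growth-preservation reasoning you already invoke.
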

The proof of this lemma will be given in 
Subsection~\ref{su_proof_of_first_conceptual_lemma}.

We remark that although
\cite{broder,friedman_random_graphs,friedman_relative,linial_puder} 
work with adjacency
matrices, one could get similar estimates (for regular graphs)
by working with Hashimoto
matrices; if we work with Hashimoto matrices,
then these older works would essentially use
the above lemma in the (simple) special case where $D=0$, where
the $c_0,\ldots,c_{r-1}$ have vanishing
polyexponential part.  Hence one can view the above lemma as a generalization
of a lemma used by
older trace methods, where the older methods
use the special
case $D=0$.

It is easy to see roughly why the above lemma is true: we write
$$
{\rm RealTrace}(M,k) + {\rm NonRealTrace}(M,k) = \Trace(M^k)
$$
whose expected value
has a power series expansion, namely the right-hand-side of
\eqref{eq_matrix_model_exp}.  
The non-real trace term is bounded by $n$ (the maximum possible
number of eigenvalues non-real eigenvalues)
times $\Lambda_0^n$.
We apply $\Ann_{D,L}(S)$ where $D$ is 
large enough to annihilate the polyexponential parts of the $c_i$;
this leaves (1) $\Ann_{D,L}(S)$ applied to the $c_i(k)/n^i$, which
is $1/n^i$ times a function of growth $\Lambda_0$, plus 
(2) $\Ann_{D,L}(S)$ applied to the expected non-real trace, which is
bounded by $n$ times another function of growth $\Lambda_0$, plus
(3) $\Ann_{D,L}(S)$ applied to $c_r(k)/n^r$ which is $1/n^r$ times a 
function of growth $\Lambda_1$.
To prove the above lemma rigorously one has to verify that
$\Ann_{D,L}(S)$ acts in this way on the various terms.

\begin{lemma}\label{le_real_eigs_hp_bound}
Let $\{\cM_n\}_{n\in N}$ be a $(\Lambda_0,\Lambda_1)$-bounded matrix model.
Let $L\subset\complex$ be any finite set closed under conjugation,
and let both $D$ and $k$ be positive even integers.
Then for any $\theta,\epsilon>0$,
for all $n\in\naturals$ and $k\in\integers$,
\begin{equation}\label{eq_E_n_theta_ineq}
n^{-\theta D (\# L) } (\Lambda_0+\epsilon)^k 
\EE{\rm out}_{\cM_n}\bigl[ B_{\Lambda_0+\epsilon}(0)\cup B_{n^{-\theta}}(L) \bigr]
\le
\Ann_{D,L}(S) \Bigl( \EE_{M\in\cM_n}\bigl[ {\rm RealTrace}(M,k) \bigr]  \Bigr)
\end{equation} 
(and both sides of the inequality are purely real).
\end{lemma}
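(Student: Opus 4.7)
The plan is to evaluate $\Ann_{D,L}(S)$ applied to ${\rm RealTrace}(M,k)$ eigenvalue by eigenvalue, show each term is non-negative, and then extract a quantitative lower bound from the eigenvalues that lie outside $B_{\Lambda_0+\epsilon}(0)\cup B_{n^{-\theta}}(L)$. By linearity of expectation, $\Ann_{D,L}(S)$ commutes with $\EE_{M\in\cM_n}[\,\cdot\,]$, so it suffices to work with a fixed $M$ and then average.

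First I would write ${\rm RealTrace}(M,k)=\sum_\mu \mu^k$, summed over the real eigenvalues of $M$ with multiplicity, and use \eqref{eq_apply_shift_exponential} to get $\Ann_{D,L}(S)({\rm RealTrace}(M,k))=\sum_\mu \Ann_{D,L}(\mu)\,\mu^k$. Next, I would invoke \eqref{eq_positivity}: since $L$ is closed under conjugation, $\Ann_{1,L}(\mu)$ is real for real $\mu$, so $\Ann_{D,L}(\mu)=\Ann_{1,L}(\mu)^D\ge 0$ because $D$ is even; and $\mu^k\ge 0$ because $k$ is even. Hence every summand is non-negative, which already shows the right-hand side of \eqref{eq_E_n_theta_ineq} is real and non-negative.

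The key observation that brings in the left-hand side is that, because $\{\cM_n\}$ is $(\Lambda_0,\Lambda_1)$-bounded, every non-real eigenvalue of $M$ lies in $B_{\Lambda_0}(0)\subset B_{\Lambda_0+\epsilon}(0)$; therefore any eigenvalue of $M$ outside $B_{\Lambda_0+\epsilon}(0)\cup B_{n^{-\theta}}(L)$ is automatically real. For such a $\mu$ we have $|\mu|>\Lambda_0+\epsilon$, so (using again that $k$ is even) $\mu^k\ge(\Lambda_0+\epsilon)^k$, and $|\mu-\ell|>n^{-\theta}$ for every $\ell\in L$, so
$$
\Ann_{D,L}(\mu)=\Bigl(\prod_{\ell\in L}|\mu-\ell|\Bigr)^D>n^{-\theta D(\#L)}.
$$
Thus each exceptional eigenvalue contributes at least $n^{-\theta D(\#L)}(\Lambda_0+\epsilon)^k$ to the sum $\sum_\mu \Ann_{D,L}(\mu)\mu^k$, while the remaining (real) eigenvalues contribute non-negative amounts.

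Summing over the exceptional eigenvalues, discarding the non-negative remainder, and taking $\EE_{M\in\cM_n}$ of both sides then yields \eqref{eq_E_n_theta_ineq}, since by definition $\EE{\rm out}_{\cM_n}[B_{\Lambda_0+\epsilon}(0)\cup B_{n^{-\theta}}(L)]$ is precisely the expected number of eigenvalues outside that set. There is no substantive obstacle here: the whole argument is a clean consequence of the positivity observation \eqref{eq_positivity} together with the fact that non-real eigenvalues are confined to $B_{\Lambda_0}(0)$. The only subtlety worth flagging in the writeup is justifying the commutation of $\Ann_{D,L}(S)$ with expectation (immediate from linearity) and confirming that both sides of \eqref{eq_E_n_theta_ineq} are real, which follows because $\Ann_{D,L}$ has real coefficients ($L$ being conjugation-invariant) and ${\rm RealTrace}(M,k)$ is real-valued.
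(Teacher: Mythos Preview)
Your proposal is correct and follows essentially the same approach as the paper's own proof: both exploit the positivity of $\Ann_{D,L}(\mu)\mu^k$ for real $\mu$ (from \eqref{eq_positivity}), discard the non-negative contributions from eigenvalues inside the exclusion set, and bound the remaining terms from below using $|\mu|\ge\Lambda_0+\epsilon$ and $|\mu-\ell|\ge n^{-\theta}$. The paper packages this as an infimum over the set $X_n=[-\Lambda_1,\Lambda_1]\setminus(B_{\Lambda_0+\epsilon}(0)\cup B_{n^{-\theta}}(L))$, while you argue eigenvalue by eigenvalue and make explicit the (implicit in the paper) point that any eigenvalue outside the exclusion set is automatically real; the substance is identical.
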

The proof of this lemma will be given in 
Subsection~\ref{su_proof_of_second_conceptual_lemma}.
Note that this lemma does not require that $\{\cM_n\}$ have any
asymptotic expansion; its proof uses only the location of the
eigenvalues of the matrices in $\cM_n$.


Once we have proven the above two lemmas, we will combine them to conclude
that
$$
n^{-\theta D (\# L) } (\Lambda_0+\epsilon)^k
\EE{\rm out}_{\cM_n}\bigl[ B_{\Lambda_0+\epsilon}(0)\cup B_{n^{-\theta}}(L) \bigr]
\le f_0(k)n+f_1(k)n^{-r},
$$
with the notation and conditions of the above lemmas;
then, with $D,L$ fixed, we will judiciously choose a fixed $r$ and
variable $k=k(n)$ to prove the Exceptional Eigenvalue Bound
(Lemma~\ref{le_sidestep_one}).

\subsection{Intermediate Lemmas} 

In this subsection we state some lemmas that will be used both to
prove Lemma~\ref{le_real_eigs_bound} and a variant thereof needed to
complete the proof of the Sidestepping Lemma.  They are conceptually
helpful and very easy to prove.


\begin{lemma}\label{le_c_i}
Let $c=c(k)\from\naturals\to\complex$ 
be an approximate polyexponential function with error growth
$\Lambda_0\in\reals$, whose larger
bases (with respect to $\Lambda_0$)
lie in a finite set $L\subset\complex$.  Then for sufficiently
large $D$ we have $\Ann_{D,L}(S)c(k)$ is of growth $\Lambda_0$.
\end{lemma}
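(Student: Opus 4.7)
The plan is to decompose $c(k)$ according to the definition of an approximate polyexponential and then apply the bulleted facts about the shift operator that were just established in the excerpt. Write $c(k)=f(k)+g(k)$ where $g$ is of growth $\Lambda_0$ and $f$ is the polyexponential part of $c$ with respect to $\Lambda_0$, so that
\[
f(k)=\sum_{\ell\in L'}p_\ell(k)\,\ell^k,
\]
where $L'\subset L$ consists of the larger bases of $c$ (those $\ell$ with $|\ell|>\Lambda_0$), and each $p_\ell$ is a nonzero polynomial. Such a decomposition exists and is unique, by the definition of approximate polyexponential.

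Next, I would choose $D$ larger than $\max_{\ell\in L'}\deg(p_\ell)$ and argue that $\Ann_{D,L}(S)f=0$. Since polynomials in $S$ commute, I can factor
\[
\Ann_{D,L}(S)=\Bigl(\prod_{\ell\in L\setminus L'}(S-\ell)^D\Bigr)\prod_{\ell\in L'}(S-\ell)^D,
\]
and by the identity \eqref{eq_annihilate_p_mu} each factor $(S-\ell)^D$ for $\ell\in L'$ annihilates the corresponding summand $p_\ell(k)\ell^k$ of $f$. Commutativity then yields $\Ann_{D,L}(S)f=0$.

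It remains to handle $g$. By the observation \eqref{eq_Sf_also_growth_rho} (together with its immediate corollaries listed right after it in the excerpt), $Q(S)g$ is of growth $\Lambda_0$ for any fixed polynomial $Q$; applying this with $Q=\Ann_{D,L}$ gives that $\Ann_{D,L}(S)g$ is of growth $\Lambda_0$. Combining the two pieces,
\[
\Ann_{D,L}(S)c=\Ann_{D,L}(S)f+\Ann_{D,L}(S)g=\Ann_{D,L}(S)g,
\]
which is of growth $\Lambda_0$, as desired.

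There is really no main obstacle here: the lemma is a direct packaging of the two facts that polynomials in $S$ annihilate polyexponentials of sufficiently bounded degree on each base, and that they preserve growth $\Lambda_0$. The only minor point to be careful about is to note that bases of $c$ with $|\ell|\le\Lambda_0$ (if any are listed in $L$) may be absorbed into $g$ without changing the conclusion, and that for the $\ell=0$ convention (where $p_\ell(k)\ell^k$ may mean an arbitrary finitely-supported function) one simply chooses $D$ larger than the support, so that $(S-0)^D=S^D$ annihilates it.
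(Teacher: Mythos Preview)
Your proof is correct and follows essentially the same approach as the paper: decompose $c=f+g$, choose $D$ large enough that $\Ann_{D,L}(S)$ annihilates the polyexponential part $f$ (citing the claim after Definition~\ref{de_D_L_annihilator}), and observe that $\Ann_{D,L}(S)g$ remains of growth $\Lambda_0$ by \eqref{eq_Sf_also_growth_rho}. Your version adds a bit more detail (the explicit threshold for $D$, the commutativity argument, and the remark about the $\ell=0$ convention), but the content is the same.
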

\begin{proof}
We have $c(k)=f(k)+g(k)$ where $f$ is a polyexponential function
whose bases lie in $L$, and $g$ is of growth $\Lambda_0$.
If $D$ is sufficiently large then $\Ann_{D,L}(f)=0$ (by the first claim
after Definition~\ref{de_D_L_annihilator}).
Therefore for such $D$ we have
$$
\Ann_{D,L}(S)c(k) = \Ann_{D,L}(S)g(k),
$$
and the right-hand-side is a function of growth $\Lambda_0$, according
to the discussion around \eqref{eq_Sf_also_growth_rho}.
\end{proof}

Notice that if $c(k)=1$ for all $k$, then $c$ is of growth $1$; we easily check
that
$(S+1)^D c$ is the function $2^D c(k)=2^D$, which is not uniformly bounded
in $D$.
So it is important to understand
that in all the lemmas in this section, $D,L$ are arbitrary
but fixed, and the growth bounds are not uniform in $D$; for similar
reasons the
bounds are not uniform in $L$: taking $c$ as above and
$L=\{-1,-2,\ldots,-m\}$, we have $\Ann_{1,L}c = (m+1)!$.

\begin{lemma}\label{le_ann_c_r}
Let $r\ge 0$ be an integer, $K\from\naturals\to\naturals$ a function,
$N\subset\integers$ a subset, 
and $c=c(k)\from\naturals\to\reals_{\ge 0}$ 
a function of growth 
$\Lambda_1\in\reals$.
Let $f=f(k,n)$ be any function $\naturals\times N\to\complex$
such that $|f(k,n)|\le c(k)n^{-r}$ provided that $k,n\in\naturals$
satisfy $n\in N$ and $1\le k \le K(n)$.
Then for any finite set $L\subset\complex$ and any integer $D\ge 0$,
there is a function $g=g(k)$ of growth $\Lambda_1$ such that
$$
\bigl| \bigl( \Ann_{D,L}(S)f\bigr) (k,n) \bigr| \le g(k)n^{-r}
$$
provided that $n\in N$ and $1\le k \le K(n)-D(\#L)$.
\end{lemma}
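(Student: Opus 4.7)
The plan is to reduce everything to an explicit expansion of $\Ann_{D,L}(S)$ as a polynomial in $S$, and then to push the pointwise bound on $f$ through that expansion. First I would write
$$
\Ann_{D,L}(z) = \prod_{\ell\in L}(z-\ell)^D = \sum_{j=0}^{d} a_j z^j,
\quad d = D(\#L),
$$
where the $a_j\in\complex$ depend only on $D$ and $L$. Then by definition of the shift operator,
$$
\bigl(\Ann_{D,L}(S)f\bigr)(k,n) = \sum_{j=0}^{d} a_j\, f(k+j,n).
$$
The restriction $1\le k\le K(n)-D(\#L)$ in the conclusion is exactly what is needed to guarantee that each argument $k+j$ with $0\le j\le d$ satisfies $1\le k+j\le K(n)$, so that the hypothesis $|f(k+j,n)|\le c(k+j)n^{-r}$ is available for every term in the sum.

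Next I would apply the triangle inequality to obtain
$$
\bigl|\bigl(\Ann_{D,L}(S)f\bigr)(k,n)\bigr|
\ \le\ \Bigl(\sum_{j=0}^{d}|a_j|\,c(k+j)\Bigr)\, n^{-r},
$$
so it suffices to define $g(k)\eqdef\sum_{j=0}^{d}|a_j|\,c(k+j)$ and verify that $g$ is of growth $\Lambda_1$. Since $c$ is of growth $\Lambda_1$, for any fixed $\epsilon>0$ we have $c(m)<(\Lambda_1+\epsilon/2)^m$ for all $m$ sufficiently large; hence for $k$ large enough,
$$
g(k)\ \le\ \Bigl(\sum_{j=0}^{d}|a_j|(\Lambda_1+\epsilon/2)^j\Bigr)(\Lambda_1+\epsilon/2)^k.
$$
The parenthesized factor is a constant depending only on $D,L,\epsilon$, and since $(\Lambda_1+\epsilon/2)/(\Lambda_1+\epsilon)<1$, this constant is absorbed into $(\Lambda_1+\epsilon)^k$ for $k$ large. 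Thus $g(k)<(\Lambda_1+\epsilon)^k$ eventually, confirming that $g$ has growth $\Lambda_1$.

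There is no real obstacle: the lemma is essentially a bookkeeping statement about how a fixed polynomial in $S$ acts on pointwise bounds of growth $\Lambda_1$. The only subtlety worth flagging is the range restriction $k\le K(n)-D(\#L)$, which must be derived exactly from the highest shift $S^d$ appearing in $\Ann_{D,L}(S)$, and the fact, already noted in the discussion around \eqref{eq_Sf_also_growth_rho}, that the constants produced by $\Ann_{D,L}(S)$ depend on $D$ and $L$ and so the resulting $g$ is allowed to depend on $D,L,r$ (as well as on $c$), but not on $n$.
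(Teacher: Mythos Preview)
Your proof is correct and follows essentially the same approach as the paper: expand $\Ann_{D,L}(z)$ as a polynomial of degree $D(\#L)$, apply the triangle inequality termwise, use the hypothesis on $f$ for each shifted argument (which the range restriction permits), and conclude by noting that each $c(k+j)$ is of growth $\Lambda_1$. Your write-up is in fact slightly more explicit than the paper's in verifying the growth bound on $g$.
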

\begin{proof}
We have
$$
\Ann_{D,L}(z) = a_0 + a_1 z + \cdots + a_t z^t
$$
for some $a_i\in \complex$ and $t=D(\#L)$.  For a fixed $n\in N$, the triangle
inequality shows that
$$
\bigl| \bigl( \Ann_{D,L}(S)f\bigr) (k,n) \bigr| 
\le \bigl( |a_0|\, |f(k,n)|+ |a_1|\, |f(k+1,n)| +\cdots
+|a_t|\,|f(k+t,n)| \bigr); 
$$
provided that $1\ge k$ and $k+t\le K(n)$, the right-hand-side above is 
bounded above by
$$
\le \bigl( |a_0|\, |c(k)|+ |a_1| \, |c(k)| 
+\cdots+|a_t|\,|c(k+t)| \bigr)/n^{-r} .
$$
But for each fixed $i\in\naturals$, 
$c(k+i)$ is also a function of growth $\Lambda_1$
(by the discussion around \eqref{eq_Sf_also_growth_rho}).
\end{proof}

\begin{lemma}\label{le_nonreal_eigs_bound}
Let $\{\cM_n\}_{n\in N}$ be a $(\Lambda_0,\Lambda_1)$-bounded matrix model
with range $K(n)$.
Then for any finite set $L\subset \complex$ and integer $D\ge 0$ we have that
for all $n\in\integers$ and $k$ with $1\le k\le K(n)-D(\#L)$,
\begin{equation}\label{eq_le_non_real_trace_bound}
\biggl|
\Ann_{D,L}(S) 
\Bigl( \EE_{M\in\cM_n}\bigl[ {\rm NonRealTrace}(M,k) \bigr]   \Bigr)
\biggr|
\le f_0(k)n,
\end{equation} 
where $f_0$ is a function of growth $\Lambda_0$.
\end{lemma}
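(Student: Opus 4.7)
The plan rests on a simple dichotomy built into the definition of a $(\Lambda_0,\Lambda_1)$-bounded matrix model: the eigenvalues of any $M\in\cM_n$ lie in $B_{\Lambda_0}(0)\cup[-\Lambda_1,\Lambda_1]$, and since $[-\Lambda_1,\Lambda_1]\subset\reals$, every \emph{non-real} eigenvalue must already lie in $B_{\Lambda_0}(0)$. First I would record this uniform pointwise bound: for each $M\in\cM_n$, the non-real eigenvalues number at most $n$ (counted with multiplicity) and each has absolute value at most $\Lambda_0$, so
\[
\bigl|{\rm NonRealTrace}(M,k)\bigr| \le n\Lambda_0^k
\qquad\text{for every integer } k\ge 0,
\]
and the same bound survives taking expectations over $\cM_n$.

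With this envelope in hand, I would next apply the shift-operator polynomial $\Ann_{D,L}(S)$ term by term, exactly mimicking the triangle-inequality step from the proof of Lemma~\ref{le_ann_c_r} (but with $\Lambda_0$ playing the role that $\Lambda_1$ played there, and with $r=0$). Writing $\Ann_{D,L}(z)=a_0+a_1 z+\cdots+a_t z^t$ with $t=D(\#L)$, I would bound each shifted expectation $\EE_{M\in\cM_n}[{\rm NonRealTrace}(M,k+i)]$ by $n\Lambda_0^{k+i}$ and combine via the triangle inequality to obtain
\[
\biggl|\Ann_{D,L}(S)\Bigl(\EE_{M\in\cM_n}\bigl[{\rm NonRealTrace}(M,k)\bigr]\Bigr)\biggr|
\le n\,f_0(k), \quad f_0(k)\eqdef\sum_{i=0}^{t}|a_i|\Lambda_0^{k+i}.
\]
The growth assertion on $f_0$ is immediate: each function $k\mapsto\Lambda_0^{k+i}$ is of growth $\Lambda_0$ by the argument displayed around \eqref{eq_Sf_also_growth_rho} (which works for any real growth constant, not only $\Lambda_1$), and a fixed finite linear combination of functions of growth $\Lambda_0$ is again of growth $\Lambda_0$.

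I do not anticipate any real obstacle: the lemma is essentially a bookkeeping exercise combining the ``non-real implies small'' observation with the triangle inequality for $\Ann_{D,L}(S)$, and its proof runs parallel to that of Lemma~\ref{le_ann_c_r}. The range restriction $1\le k\le K(n)-D(\#L)$ in the statement is in fact unnecessary for this bound (it holds for all $k\ge 0$), but it is retained for notational compatibility with Lemma~\ref{le_real_eigs_bound}, where the range really is controlled by the hypothesis that the expansion \eqref{eq_matrix_model_exp} is only available up to $k=K(n)$.
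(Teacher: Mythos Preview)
Your argument is correct. The route differs slightly from the paper's, and the difference is worth a sentence: the paper applies $\Ann_{D,L}(S)$ \emph{inside} the sum over non-real eigenvalues, using the identity $\Ann_{D,L}(S)\mu^k=\Ann_{D,L}(\mu)\mu^k$ for each individual eigenvalue $\mu$, and then bounds by $n\Lambda_0^k\cdot\max_{|z|\le\Lambda_0}|\Ann_{D,L}(z)|$. You instead treat the expected non-real trace as a black-box function of $k$ with envelope $n\Lambda_0^k$ and push $\Ann_{D,L}(S)$ through coefficient-wise via the triangle inequality, exactly as in Lemma~\ref{le_ann_c_r}. Your constant $\sum_i|a_i|\Lambda_0^i$ dominates the paper's maximum, so the paper's bound is nominally sharper, but both constants are absorbed into ``growth $\Lambda_0$'' and the distinction is immaterial. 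Your version has the virtue of reusing the mechanism of Lemma~\ref{le_ann_c_r} verbatim rather than invoking the eigenvalue-level shift identity; the paper's version makes the spectral structure slightly more visible. Your observation that the range restriction on $k$ is not actually needed here is also correct.
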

\begin{proof}
For each $M\in\cM_n$ we have
$$
{\rm NonRealTrace}(M,k) = \sum_{\mu\in {\rm NonReal}(M)} \mu^k
$$
where ${\rm NonReal}(M)$ are the eigenvalues of $M$ that are not real.
Hence
$$
\Ann_{D,L}(S)
{\rm NonRealTrace}(M,k) = 
\sum_{\mu\in {\rm NonReal}(M)} \Ann_{D,L}(\mu)\mu^k,
$$
and hence
\begin{equation}\label{eq_nonreal_each_M}
\Bigl| \Ann_{D,L}(S) {\rm NonRealTrace}(M,k) \Bigr| \le
M n \Lambda_0^k,
\end{equation} 
where
$$
M = \max_{|z|\le\Lambda_0} \bigl| \Ann_{D,L}(z) \bigr|.
$$
But the triangle inequality implies that
for any random variable $g\from\cM_n\to\complex$ we have
$$
\bigl| \EE_{M\in \cM_n}g(M) \bigr| \le
\EE_{M\in \cM_n} |g(M)|;
$$
taking $g=\Ann_{D,L}(S) {\rm NonRealTrace}(M,k)$ and
exchanging the order of $\Ann_{D,L}(S)$ and $\EE_{M\in\cM_n}$
(which clearly commute),
\eqref{eq_nonreal_each_M} implies that
$$
\biggl|
\Ann_{D,L}(S) 
\Bigl( \EE_{M\in\cM_n}\bigl[ {\rm NonRealTrace}(M,k) \bigr]   \Bigr)
\biggr| \le M n \Lambda_0^k.
$$
\end{proof}

\subsection{The Proof of Lemma~\ref{le_real_eigs_bound}}
\label{su_proof_of_first_conceptual_lemma}

\begin{proof}[Proof of Lemma~\ref{le_real_eigs_bound}]
Consider \eqref{eq_matrix_model_exp} for any fixed $r$.
For any $0\le i\le r-1$ we have
$$
\Ann_{D,L}(S)\bigl(c_i(k)/n^i \bigr) = 
\Ann_{D,L}(S)\bigl(c_i(k)\bigr)/n^i , 
$$
so
Lemmas~\ref{le_c_i} implies that for any fixed,
sufficiently large $D\in\naturals$ there is a function $f_{0,i}(k)$
of growth $\Lambda_0$ for which
$$
\bigl| \Ann_{D,L}(S)\bigl(c_i(k)/n^i \bigr) \bigr|=
\bigl| \Ann_{D,L}(S)\bigl(c_i(k)\bigr) \bigr|/n^i
\le f_{0,i}(k)/n^i \le f_{0,i}(k)
$$
since $i\ge 0$.  Adding the above equation applied to $i=0,\ldots,r-1$
and applying the triangle inequality shows that
$$
\bigl| \Ann_{D,L}(S) \bigl( c_0(k)+\ldots+c_{r-1}/n^{r-1} \bigr) \bigr|
\le F_0(k),
\quad\mbox{where}\quad
F_0(k)=f_{0,0}(k)+\cdots+f_{0,r-1}(k),
$$
is a function of growth $\Lambda_0$.  The term
$O(c_r(k)) n^{-r}$ in
\eqref{eq_matrix_model_exp}
refers to a function of $k,n$ that is
bounded by $n^{-r}$ times a function of growth $\Lambda_1$;
Lemma~\ref{le_ann_c_r} implies the same bound when
$\Ann_{D,L}(S)$ is applied;
hence,
if ${\rm RHS}={\rm RHS}(k,n)$ denotes the
right-hand-side of
\eqref{eq_matrix_model_exp},
\begin{equation}\label{eq_RHS_Ann_bound}
\bigl| \Ann_{D,L}(S) {\rm RHS}(k,n) \bigr| \le F_0(k) + n^{-r} F_1(k),
\end{equation}
where for $i=0,1$, $F_i$ is of growth $\Lambda_i$, and provided that
$1\le k\le K(n)-D(\#L)$ (since applying $\Ann_{D,L}(S)$ requires
\eqref{eq_matrix_model_exp} to hold for $k,k+1,\ldots,k+D(\#L)$).
Set
\begin{equation}\label{eq_define_gs}
g_1(k,n) \eqdef
\EE_{M\in\cM_n}\bigl[ {\rm RealTrace}(M,k) \bigr],\quad
g_2(k,n) \eqdef
\EE_{M\in\cM_n}\bigl[ {\rm NonRealTrace}(M,k) \bigr] \ .
\end{equation}
Lemma~\ref{le_nonreal_eigs_bound} implies that
$$
\bigl| \Ann_{D,L}(S) g_2(k,n)  \bigr| \le n \tilde f(k)
$$
where $\tilde f(k)$ is a function of growth $\Lambda_0$.
Since $g_1(k,n)+g_2(k,n)$ equals the left-hand-side of
\eqref{eq_matrix_model_exp}, we have
\begin{equation}\label{eq_g_one_ann}
\Ann_{D,L}(S) g_1(k,n) = -\Ann_{D,L}(S) g_2(k,n) +
\Ann_{D,L}(S) ({\rm RHS}(k,n)) .
\end{equation} 
Using \eqref{eq_RHS_Ann_bound} and \eqref{eq_le_non_real_trace_bound}
we establish
\eqref{eq_le_real_trace_bound} with $f_0(k)=\tilde f(k)+F_0(k)$ and
$f_1(k)=F_1(k)$.
\end{proof}

\subsection{Proof of Lemma~\ref{le_real_eigs_hp_bound}}
\label{su_proof_of_second_conceptual_lemma}

\begin{proof}[Proof of Lemma~\ref{le_real_eigs_hp_bound}]
Since $D$ is even and $L$ is closed under complex conjugation,
then
$\Ann_{D,L}$ maps real values to non-negative values
(see \eqref{eq_positivity} and the discussion above it), and hence
$$
\Ann_{D,L}(\mu) \ge 0, 
$$
and therefore
$$
\Ann_{D,L}(S)\mu^k = \Ann_{D,L}(\mu)\mu^k \ge 0
$$
if evaluated on an even $k\in\naturals$ and $\mu\in\reals$.
It follows that if $k$ is even, $n$ is fixed, and
$X_n\subset \reals$ is a measurable set,
then
\begin{equation}\label{eq_X_lower_bound}
\left( {\rm inf}_{x\in X_n} \bigl( \Ann_{D,L}(x) x^k \bigr) \right)
\EE_{M\in\cM_n}
\bigl[\#( \Spec(M)\cap X_n)\bigr]
 \le \Ann_{D,L}(S) 
\EE_{M\in\cM_n}\bigl[ {\rm RealTrace}(M,k) \bigr]
\end{equation} 
(and both the above numbers are non-negative reals).

Now take 
$$
X_n = [-\Lambda_1,\Lambda_1] \setminus 
\bigl( B_{\Lambda_0+\epsilon}(0)\cup B_{n^{-\theta}}(L) \bigr) 
$$
and consider any $x\in X_n$.
Then $|x-\ell|\ge n^{-\theta}$ for each
$\ell\in L$, and hence
$$
\Ann_{D,L}(x) \ge n^{-\theta D (\# L) };
$$
in addition $|x|\ge \Lambda_0+\epsilon$ since $x\in X_n$, 
and therefore
$$
\bigl( \Ann_{D,L}(x) x^k \bigr)\ge 
n^{-\theta D (\# L) } (\Lambda_0+\epsilon)^k \ .
$$
Taking the minimum over all $x\in X_n$ and combining this with 
\eqref{eq_X_lower_bound} yields
\eqref{eq_E_n_theta_ineq}.
\end{proof}

\subsection{Proof of Lemma~\ref{le_sidestep_one}}

\begin{proof}[Proof of Lemma~\ref{le_sidestep_one}]
By definition $L_r$, the set of 
larger bases (with respect to $\Lambda_0$)
of the polyexponential parts of $c_0(k),\ldots,c_{r-1}(k)$ 
in an expansion
\eqref{eq_matrix_model_exp}, consists entirely of real numbers;
hence $L_r$ is closed under conjugation.
In view of Lemmas~\ref{le_real_eigs_bound} and 
\ref{le_real_eigs_hp_bound} we have that for sufficiently large even 
$D\in\naturals$,
for any $\theta>0$ and all positive, even $k\le K(n)-D(\#L)$,
\begin{equation}\label{eq_almost_done}
n^{-\theta D (\# L) } (\Lambda_0+\epsilon)^k 
\EE{\rm out}_{\cM_n}\bigl[ B_{\Lambda_0+\epsilon}(0)\cup B_{n^{-\theta}}(L_r) \bigr]
\le  f_0(k)n+f_1(k)n^{-r},
\end{equation} 
where $f_i$ are of growth $\Lambda_i$, for a function $K(n)\gg \log n$.
Since the $f_i$ are of growth $\Lambda_i$,
for every $\delta>0$ we have
$$
n^{-\theta D (\# L) } (\Lambda_0+\epsilon)^k 
\EE{\rm out}_{\cM_n}\bigl[ B_{\Lambda_0+\epsilon}(0)\cup B_{n^{-\theta}}(L_r) \bigr]
\le  (\Lambda_0+\delta)^k n+(\Lambda_1+\delta)^k n^{-r} 
$$
for $n$ large; hence
$$
\EE{\rm out}_{\cM_n}\bigl[ B_{\Lambda_0+\epsilon}(0)\cup B_{n^{-\theta}}(L_r) \bigr]
\le n^{\theta D (\# L) } (\Lambda_0+\epsilon)^{-k}
\bigl( (\Lambda_0+\delta)^k n+(\Lambda_1+\delta)^k n^{-r} \bigr).
$$
Hence, if for some even $k$ we can show that
\begin{equation}\label{eq_last_thing_for_r_one}
n^{\theta D (\# L) } (\Lambda_0+\epsilon)^{-k}
\max\bigl( (\Lambda_0+\delta)^k n,(\Lambda_1+\delta)^k n^{-r} \bigr)
\le n^{-\alpha-\delta'}
\end{equation} 
for some $\delta'>0$,
then it would follow that 
$$
\EE{\rm out}_{\cM_n}\bigl[ B_{\Lambda_0+\epsilon}(0)\cup B_{n^{-\theta}}(L_r) \bigr]
\le n^{-\alpha}
$$
for large $n$, which is just \eqref{eq_sidestep_r_one}.

Fix a
$\kappa\in\reals$---to be specified later---and let $k$ be the nearest
even integer to $\kappa\log n$ 
(which is typical in trace methods for regular graphs), where
$\log$ is with respect to any fixed base (common to all logarithms used).
Taking logarithms in 
\eqref{eq_last_thing_for_r_one} and dividing by $\log n$, it suffices to
choose $\delta,\kappa,\theta,r$ so that
$$
\theta D (\# L) - \kappa \log(\Lambda_0+\epsilon) +
\max\big(1+\kappa \log(\Lambda_0+\delta),-r+\kappa \log(\Lambda_1+\delta)\bigr)
< -\alpha-\delta' \ .
$$
By continuity of the above expression
in $\delta',\delta,\theta$, for a fixed $\kappa,r$ it would be 
possible to find $\delta',\delta,\theta>0$ satisfying the above provided that
the above inequality holds (strictly) for $\delta'=\delta=\theta=0$, i.e., 
that
\begin{eqnarray}
\label{eq_r_kappa_first}
- \kappa \log(\Lambda_0+\epsilon) & 
+1+\kappa \log\Lambda_0
& < -\alpha \ ,
\\
\label{eq_r_kappa_second}
- \kappa \log(\Lambda_0+\epsilon) & 
-r+\kappa \log\Lambda_1
& < -\alpha \ .
\end{eqnarray}
To choose such $\kappa,r$, it suffices to first choose $\kappa$ to
satisfy \eqref{eq_r_kappa_first},
which is possible since $\log(\Lambda_0+\epsilon)>\log\Lambda_0$
since $\epsilon>0$; then we choose $r$ to satisfy 
\eqref{eq_r_kappa_second}.
It follows that there exist $\kappa,r$ and $\delta',\delta,\theta>0$ 
that depend only on $\Lambda_0,\Lambda_1,\epsilon,\alpha$ that
satisfy \eqref{eq_last_thing_for_r_one}, and therefore
\eqref{eq_sidestep_r_one} holds for $n$ sufficiently large.

If we set $r_0,\theta_0$ to be those particular values of $r,\theta$
(which depend only on $\Lambda_0,\Lambda_1,\epsilon,\alpha$),
then for any other $r\ge r_0$ and $\theta\le \theta_0$ we 
also have \eqref{eq_sidestep_r_one}, since in this case we have
$$
B_{n^{-\theta_0}}(L_{r_0})   
\subset
B_{n^{-\theta}}(L_r)   
$$
for all $n\in\naturals$.
\end{proof}


We remark that 
\eqref{eq_r_kappa_first} holds iff
\begin{align}
\label{eq_first_ex_kappa}
\kappa & > (\alpha+1)/ \log\left(\frac{\Lambda_0+\epsilon}{\Lambda_0}\right)
\ ,   \\
\label{eq_first_ex_r}
r & > \alpha + \kappa \log\left(\frac{\Lambda_1}{\Lambda_0+\epsilon}\right) \ .
\end{align}
Hence if $\Lambda_0,\Lambda_1$ are fixed,
then $\kappa$ must be at least 
proportional to $\alpha/\epsilon$ as $\alpha\to\infty$ and $\epsilon\to 0$,
and hence the same is true of $r$;
hence we need arbitrarily high order expansions for the model $\cM_n$
as $\alpha\to\infty$ and $\epsilon\to 0$.

\section{Proof of The Sidestepping Lemma (Lemma~\ref{le_sidestep_two})}
\label{se_r1}

In this section we give a variant of Lemma~\ref{le_real_eigs_bound} that
allows us to isolate the $\cM_n$ expected value of eigenvalues near
points outside of $B_{\Lambda_0+\epsilon}$; we then combine this result
with that of \eqref{eq_sidestep_r_one} (in the Exceptional Eigenvalue Bound,
i.e., Lemma~\ref{le_sidestep_one}) to prove
Lemma~\ref{le_sidestep_two}.

\subsection{A Variant of Lemma~\ref{le_real_eigs_bound}}

\begin{lemma}\label{le_real_eigs_bound_with_ell}
Let $\{\cM_n\}_{n\in N}$ be a $(\Lambda_0,\Lambda_1)$-bounded matrix model
that has an order $r$ expansion \eqref{eq_matrix_model_exp} 
with range $K(n)$ (as in Definition~\ref{de_matrix_model}).
Let $L\subset\complex$
be any finite set including all the
bases of $c_0(k),\ldots,c_{r-1}(k)$ in an expansion
\eqref{eq_matrix_model_exp}.  For some $\ell\in L$, let
$\widetilde L= L\setminus\{\ell\}$, and for $i=0,\ldots,r-1$
let $p_i=p_i(k)$ be the polynomial such that $p_i(k)\ell^k$ is the
$\ell$-part of $c_i(k)$.
Then for sufficiently large $\widetilde D\in\naturals$,
for all $k$ with $1\le k\le K(n)-\widetilde D(\# \widetilde L)$ we have
\begin{equation}\label{eq_le_real_trace_bound_r1}
\biggl|
\Ann_{\widetilde D,\widetilde L}(S) 
\Bigl( \EE_{M\in\cM_n}\bigl[ {\rm RealTrace}(M,k) \bigr]  - 
\ell^k\sum_{i=0}^{r-1} p_i(k)/n_i \Bigr)
\biggr|
\le f_0(k)n+f_1(k)n^{-r},
\end{equation}
where $f_0,f_1$ (depending on $\widetilde D,\widetilde L,r$ and the $c_i$,
but not on $n,k$)
are functions of respective growths $\Lambda_0,\Lambda_1$.
\end{lemma}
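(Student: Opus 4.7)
The plan is to mimic the proof of Lemma~\ref{le_real_eigs_bound}, but to preserve the $\ell$-parts rather than annihilating them; the trick is to subtract the $\ell$-parts explicitly from both sides of \eqref{eq_matrix_model_exp} before applying $\Ann_{\widetilde D,\widetilde L}(S)$.

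First, for each $i=0,\dots,r-1$ define the ``$\ell$-removed'' coefficient
$$
\tilde c_i(k) \eqdef c_i(k) - p_i(k)\,\ell^k .
$$
By Definition~\ref{de_matrix_model}, $c_i$ is an approximate polyexponential with error growth $\Lambda_0$ whose larger bases lie in $L$; removing the $\ell$-part of $c_i$ leaves an approximate polyexponential with error growth $\Lambda_0$ whose larger bases lie in $\widetilde L = L\setminus\{\ell\}$. Subtracting $\ell^k \sum_{i=0}^{r-1} p_i(k)/n^i$ from both sides of \eqref{eq_matrix_model_exp} gives
$$
\EE_{M\in\cM_n}[\Trace(M^k)] - \ell^k\sum_{i=0}^{r-1} p_i(k)/n^i
= \tilde c_0(k)+\tilde c_1(k)/n+\cdots+\tilde c_{r-1}(k)/n^{r-1} + O(c_r(k))/n^r .
$$

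Next, I would apply $\Ann_{\widetilde D,\widetilde L}(S)$ to both sides of this identity, choosing $\widetilde D$ large enough that Lemma~\ref{le_c_i} (applied to each $\tilde c_i$ with $L$ replaced by $\widetilde L$) guarantees that $\Ann_{\widetilde D,\widetilde L}(S)\tilde c_i(k)$ is of growth $\Lambda_0$. Dividing the $i$-th term by $n^i\ge 1$ and adding, as in the proof of Lemma~\ref{le_real_eigs_bound}, bounds the sum $\sum_i \Ann_{\widetilde D,\widetilde L}(S)\tilde c_i(k)/n^i$ by a single function $F_0(k)$ of growth $\Lambda_0$ uniformly in $n$. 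Lemma~\ref{le_ann_c_r} then controls the $O(c_r(k))/n^r$ term, giving $n^{-r}$ times a function $F_1(k)$ of growth $\Lambda_1$; this is where we need the range constraint $k\le K(n)-\widetilde D(\#\widetilde L)$, so that $\Ann_{\widetilde D,\widetilde L}(S)$ is evaluated only at arguments where \eqref{eq_matrix_model_exp} holds.

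Finally, to isolate the real trace I split $\Trace(M^k)= {\rm RealTrace}(M,k)+{\rm NonRealTrace}(M,k)$ as in \eqref{eq_real_non_real}, take expectations, and invoke Lemma~\ref{le_nonreal_eigs_bound} (applied with the finite set $\widetilde L$ and degree $\widetilde D$) to absorb $\Ann_{\widetilde D,\widetilde L}(S)\EE_{M\in\cM_n}[{\rm NonRealTrace}(M,k)]$ into $n$ times a function of growth $\Lambda_0$. Rearranging and using the triangle inequality yields \eqref{eq_le_real_trace_bound_r1} with $f_0(k)=F_0(k)+\tilde f(k)$ (of growth $\Lambda_0$) and $f_1(k)=F_1(k)$ (of growth $\Lambda_1$). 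There is no real obstacle here; the entire argument is parallel to Lemma~\ref{le_real_eigs_bound}, and the only novelty is the algebraic manoeuvre of pulling out the $\ell$-parts at the start so that $\widetilde L$ suffices for annihilation.
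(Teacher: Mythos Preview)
Your proposal is correct and follows essentially the same approach as the paper's proof: both define $\tilde c_i(k)=c_i(k)-p_i(k)\ell^k$ so that the larger bases of each $\tilde c_i$ lie in $\widetilde L$, apply $\Ann_{\widetilde D,\widetilde L}(S)$, and then invoke Lemmas~\ref{le_c_i}--\ref{le_nonreal_eigs_bound} exactly as in the proof of Lemma~\ref{le_real_eigs_bound}. The only cosmetic difference is ordering---the paper isolates ${\rm RealTrace}$ first (defining $\widetilde g_1$) and then writes the modified expansion, whereas you work with the full trace expansion first and split off ${\rm NonRealTrace}$ at the end.
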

\begin{proof}
Setting
\begin{equation}\label{eq_widetidle_g_1_def}
\widetilde g_1(k,n)\eqdef
\EE_{M\in\cM_n}\bigl[ {\rm RealTrace}(M,k) \bigr]  - 
\sum_{i=0}^{r-1}p_i(k)\ell^k/n^{-i}
\end{equation} 
and $g_2(k,n)$ as in \eqref{eq_define_gs}, we have
\begin{equation}\label{eq_new_expansion}
\widetilde g_1(k,n)+g_2(k,n)=\sum_{i=0}^{r-1}\frac{\widetilde c_i(k)}{n^i} + 
\frac{O\bigl(c_r(k)\bigr)}{n^r} \ ,
\end{equation} 
where $\widetilde c_i(k)=c_i(k)-p_i(k)\ell^k$; therefore all the
larger bases (with respect to $\Lambda_0$) of the 
$\widetilde c_i(k)$ lie in $\widetilde L$.
Now to both sides of
\eqref{eq_new_expansion}
we apply $\Ann_{\widetilde D,\widetilde L}$, 
subtract $g_2(k,n)$, take absolute values, and apply the triangle
inequality to get
$$
\bigl| \Ann_{\widetilde D,\widetilde L}\widetilde g_1(k,n) \bigr| \le  
\bigl| \Ann_{\widetilde D,\widetilde L}g_2(k,n) \bigr| +
\bigl| \Ann_{\widetilde D,\widetilde L}c_r(k)/n^r \bigr| +
\sum_{i=0}^{r-1} \bigl| \Ann_{\widetilde D,\widetilde L}
\widetilde c_i(k)/n^i \bigr|
$$
Applying
Lemmas~\ref{le_c_i}---\ref{le_nonreal_eigs_bound} (as they were
in the proof of Lemma~\ref{le_real_eigs_bound}), we conclude that
$$
\bigl| \Ann_{\widetilde D,\widetilde L}g_2(k,n) \bigr| \le 
n f_0(k),
\quad
\bigl| \Ann_{\widetilde D,\widetilde L}c_r(k)/n^r \bigr| \le 
n^{-r} f_1(k),
$$
where $f_i(k)$ are functions of growth $\Lambda_i$ ($i=1,2$), and
$$
\bigl| \Ann_{\widetilde D,\widetilde L}
\widetilde c_i(k)/n^i \bigr| \le f_{0,i}(k)/n^i
$$
where $f_{0,i}(k)$ is a function of growth $\Lambda_0$
for $i=0,\ldots,r-1$.
Letting $\tilde f_0$ be $f_0$ plus the sum of the $f_{0,i}$ we have
$$
\bigl| \Ann_{\widetilde D,\widetilde L}\widetilde g_1(k,n) \bigr| \le  
\tilde f_0(k) n + f_1(k) n^{-r},
$$
where $\tilde f_0,f_1$ are of respective growths $\Lambda_0,\Lambda_1$.
This bound and \eqref{eq_widetidle_g_1_def} imply
\eqref{eq_le_real_trace_bound_r1}.
\end{proof}

\subsection{A Variant of Lemma~\ref{le_real_eigs_hp_bound}}

Next we prove a variant of Lemma~\ref{le_real_eigs_hp_bound} that allows
us to relate the term 
$$
\Ann_{\widetilde D,\widetilde L}(S) 
\EE_{M\in\cM_n}\bigl[ {\rm RealTrace}(M,k) \bigr] 
$$
(which implicitly forms part of the left-hand-side of
\eqref{eq_le_real_trace_bound_r1})
to the quantity
$\EE{\rm in}_{\cM_n}[B_{n^{-\theta}}(\ell) ]$,
with notation as in Lemma~\ref{le_real_eigs_bound}.
Let us state this result formally, in a way that will be useful to us.

\begin{lemma}\label{le_control_expectation}
Let $r\in\naturals$, and 
let $\{\cM_n\}_{n\in N}$ be a $(\Lambda_0,\Lambda_1)$-bounded matrix model
that has an order $r$ expansion of range $K(n)$
(\eqref{eq_matrix_model_exp}, Definition~\ref{de_matrix_model}).
Let $\widetilde D\ge 0$ be an integer and $\epsilon,\theta>0$ real numbers;
let $\ell\in\reals$ with $|\ell|>\Lambda_0$ and
$\widetilde L\subset\complex\setminus\{\ell\}$ a finite
set such that $L={\ell}\cup \widetilde L$ lies in
$B_{\Lambda_1}(0)$.
Then there is a constant, $C$, independent of
$k$ and $n$, such that 
for large $n\in N$ and all $1\le k\le K(n)-\widetilde D(\#L)$
\begin{align}
\label{eq_ann_real_ell}
& \Big| 
\Big(1+O\bigl(n^{-\theta}\bigr)\Bigr)
\Ann_{\widetilde D,\widetilde L}(\ell) \ell^k 
\EE{\rm in}_{\cM_n}[B_{n^{-\theta}}(\ell) ]
-\Ann_{\widetilde D,\widetilde L}(S) 
\EE_{M\in\cM_n}\bigl[ {\rm RealTrace}(M,k) \bigr] \Bigr| 
\\
\label{eq_stuff_to_bound}
&\le 
C(\Lambda_0+\epsilon)^k n + 
C \Lambda_1^k
\Bigl( n^{1-\theta\widetilde D}+
\EE{\rm out}_{\cM_n}\bigl[B_{\Lambda_0+\epsilon}(0)\cup 
B_{n^{-\theta}}(L)\bigr] 
\Bigr)  
\end{align}
provided that $K(n)\ll n^{\theta}$
(i.e., $K(n)/n^{\theta}\to 0$ as $n\to\infty$).
\end{lemma}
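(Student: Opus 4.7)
The plan is to expand ${\rm RealTrace}(M,k)=\sum_\mu \mu^k$ as a sum over the real eigenvalues $\mu$ of $M$ (with multiplicity), to exchange $\Ann_{\widetilde D,\widetilde L}(S)$ with the summation and the $\EE_{M\in\cM_n}$ (both being linear), and to apply \eqref{eq_apply_shift_exponential} to obtain
\[
\Ann_{\widetilde D,\widetilde L}(S)\,\EE_{M\in\cM_n}\bigl[{\rm RealTrace}(M,k)\bigr]
\;=\;\EE_{M\in\cM_n}\sum_{\mu\text{ real}} \Ann_{\widetilde D,\widetilde L}(\mu)\,\mu^k.
\]
I would then partition the real eigenvalues of each $M$ into four disjoint groups: (ii) those in $B_{n^{-\theta}}(\ell)$; (i) those in $B_{\Lambda_0+\epsilon}(0)\setminus B_{n^{-\theta}}(\ell)$; (iii) those in $B_{n^{-\theta}}(\widetilde L)\setminus\bigl(B_{n^{-\theta}}(\ell)\cup B_{\Lambda_0+\epsilon}(0)\bigr)$; and (iv) the remaining real $\mu\in[-\Lambda_1,\Lambda_1]$, which by construction lie outside $B_{\Lambda_0+\epsilon}(0)\cup B_{n^{-\theta}}(L)$. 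Groups (i), (iii), (iv) will supply the three error terms in \eqref{eq_stuff_to_bound}, while group (ii) produces the main term on the left-hand side.

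The three error-term bounds are routine. For group (i), $|\Ann_{\widetilde D,\widetilde L}|$ is bounded by a fixed constant on the compact interval $[-(\Lambda_0+\epsilon),\Lambda_0+\epsilon]$ and $|\mu^k|\le(\Lambda_0+\epsilon)^k$; since each $M$ has at most $n$ real eigenvalues, the total contribution is $O(n(\Lambda_0+\epsilon)^k)$. For group (iii), the factor $(\mu-m)^{\widetilde D}$ in $\Ann_{\widetilde D,\widetilde L}(\mu)$ corresponding to the nearest $m\in\widetilde L$ has absolute value at most $n^{-\theta\widetilde D}$, while all other factors (and $|\mu^k|\le\Lambda_1^k$) are bounded, giving a contribution of $O(n^{1-\theta\widetilde D}\Lambda_1^k)$. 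For group (iv), $|\Ann_{\widetilde D,\widetilde L}|$ is bounded on $[-\Lambda_1,\Lambda_1]$ and $|\mu^k|\le\Lambda_1^k$, while the expected number of such $\mu$ is bounded by $\EE{\rm out}_{\cM_n}\bigl[B_{\Lambda_0+\epsilon}(0)\cup B_{n^{-\theta}}(L)\bigr]$, yielding the third error term.

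For group (ii) I would perform a Taylor expansion at $\ell$: write $\mu=\ell+\delta$ with $|\delta|\le n^{-\theta}$. Since $\ell\notin\widetilde L$ every factor $\ell-m$ of $\Ann_{\widetilde D,\widetilde L}(\ell)=\prod_{m\in\widetilde L}(\ell-m)^{\widetilde D}$ is a nonzero constant, so $\Ann_{\widetilde D,\widetilde L}(\mu)=\Ann_{\widetilde D,\widetilde L}(\ell)(1+O(n^{-\theta}))$. Writing $\mu^k=\ell^k(1+\delta/\ell)^k$ and invoking $k\le K(n)\ll n^\theta$ to ensure $k|\delta/\ell|=o(1)$, one gets $(1+\delta/\ell)^k=1+O(kn^{-\theta})$; combining, $\Ann_{\widetilde D,\widetilde L}(\mu)\,\mu^k=(1+O(n^{-\theta}))\,\Ann_{\widetilde D,\widetilde L}(\ell)\,\ell^k$ in the sense of the statement. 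Crucially, $|\ell|>\Lambda_0$ means that for large $n$ the ball $B_{n^{-\theta}}(\ell)\subset\complex$ is disjoint from $B_{\Lambda_0}(0)$; as all non-real eigenvalues of matrices in $\cM_n$ lie in $B_{\Lambda_0}(0)$, every eigenvalue of every $M\in\cM_n$ lying in $B_{n^{-\theta}}(\ell)$ is automatically real, so the expected number of real eigenvalues in this ball equals $\EE{\rm in}_{\cM_n}[B_{n^{-\theta}}(\ell)]$. Summing the four contributions and applying the triangle inequality yields \eqref{eq_ann_real_ell}--\eqref{eq_stuff_to_bound}.

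The hard part is the bookkeeping in group (ii): the stated $(1+O(n^{-\theta}))$ factor must absorb both the Taylor error $O(n^{-\theta})$ in $\Ann_{\widetilde D,\widetilde L}(\mu)$ and the $O(kn^{-\theta})$ error from $(1+\delta/\ell)^k$. The assumption $K(n)\ll n^\theta$ enters precisely here, to ensure $kn^{-\theta}=o(1)$; strictly speaking, the $O(n^{-\theta})$ on the left must be interpreted as any function tending to zero at a rate determined by $K(n)/n^\theta$, which is harmless for all subsequent applications of the lemma. Once this is handled, every other estimate reduces to the compactness of $[-\Lambda_1,\Lambda_1]$ and the polynomial nature of $\Ann_{\widetilde D,\widetilde L}$.
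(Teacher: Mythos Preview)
Your proposal is correct and follows essentially the same approach as the paper: partition the real eigenvalues into the four regions $B_{n^{-\theta}}(\ell)$, $B_{\Lambda_0+\epsilon}(0)$, $B_{n^{-\theta}}(\widetilde L)$, and the complement, bound the last three via the obvious estimates, and handle the first by a first-order expansion of $\Ann_{\widetilde D,\widetilde L}(\mu)\mu^k$ around $\mu=\ell$ (the paper uses the mean value theorem on the product $h(x)=\Ann_{\widetilde D,\widetilde L}(x)x^k$, while you expand the two factors separately, but the arithmetic is the same). You are also right to flag the $O(kn^{-\theta})$ versus $O(n^{-\theta})$ issue and to note that $|\ell|>\Lambda_0$ forces all eigenvalues in $B_{n^{-\theta}}(\ell)$ to be real---the paper glosses over both of these points, and your treatment is in fact slightly more careful.
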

Note that since we only need $K(n) \gg \log n$ in 
Definition~\ref{de_matrix_model} (and in our main
lemmas and Theorem~\ref{th_sidestep}), 
we can always replace $K(n)$ by
a smaller function (e.g., $\max(K(n),(\log n)^2)$)
that is $\gg\log n$ and $\ll n^{\theta}$ (for all $\theta>0$);
hence the assumption that $K(n)\ll n^{\theta}$ is harmless to our
main results.
We remark that
under further assumptions on $\widetilde L$ and 
$\theta$, such as those in
Lemma~\ref{le_real_eigs_bound}, we can make the $\EE{\rm out}$ term in
\eqref{eq_stuff_to_bound}
smaller than $n^{-\alpha}$ for any $\alpha$; however,
the above lemma has no such assumptions.
\begin{proof}
For each square matrix $M$, each $\cR\subset \complex$, and $k\in\naturals$,
let
\begin{equation}\label{eq_real_trace_piece}
{\rm RealTrace}(M,k;\cR) \eqdef
\sum_{\lambda\in \Spec(M)\cap\cR\cap \reals}\lambda^k.
\end{equation}
For each $n$ let
\begin{align*}
S_0 = S_0(n) & = B_{n^{-\theta}}\bigl(\ell\bigr), \\
S_1 = S_1(n) & = B_{n^{-\theta}}\bigl(\widetilde L\bigr),  \\
S_2 = S_2(n) & = B_{\Lambda_0+\epsilon}\bigl(0 \bigr), \\
S_3 = S_3(n) & = \reals\setminus 
\bigl( B_{\Lambda_0+\epsilon}(0)\cup B_{n^{-\theta}}(L) \bigr) ,
\end{align*}
and
$$
g_0(k,n) = {\rm RealTrace}\bigl(M,k; S_0(n) \bigr)
=
\EE{\rm in}_{\cM_n}\bigl[B_{n^{-\theta}}\bigl(\ell\bigr)\bigr].
$$
We may write
\begin{align*}
\EE_{M\in \cM_n}\Bigl[ {\rm RealTrace}(M,k) \Bigr] & -
\EE_{M\in \cM_n}\Bigl[ {\rm RealTrace}\bigl(M,k; S_0(n) \bigr) \Bigr]  
\\
& =
\EE_{M\in \cM_n}\Bigl[ {\rm RealTrace}\bigl(M,k; \reals \setminus
S_0(n) \bigr) \Bigr]  .
\end{align*}
Noting that $\reals$ is contained in the union of the $S_i$, 
applying $\Ann_{\widetilde D,\widetilde L}$ to both sides we have
\begin{equation}\label{eq_g0_ann_realtrace}
\biggl|
\Ann_{\widetilde D,\widetilde L}(S)
\Bigl( g_0(k,n) - \EE_{M\in \cM_n}\bigl[ {\rm RealTrace}(M,k) \bigr] \Bigr)
\biggr|
\le
\sum_{i=1}^3 
E_i M_i,
\end{equation}
where for $i=1,2,3$
$$
E_i = \EE{\rm in}_{\cM_n}[S_i(n)], \quad
M_i = \max_{x\in S_i(n)\cap B_{\Lambda_1}(0)} 
\bigl| \Ann_{\widetilde D,\widetilde L}(S) x^k 
\bigr|.
$$
Let us estimate $E_iM_i$ for $i=1,2,3$.

Since $\widetilde D,\widetilde L$ are fixed, we have
$$
M_2 \le C \Bigl( \, \max_{x\in S_i(n)} |x| \Bigr)^k = C(\Lambda_0+\epsilon)^k,
$$
and so the trivial estimate $E_2\le n$ implies that
\begin{equation}\label{eq_E2M2}
E_2 M_2 \le C n (\Lambda_0+\epsilon)^k.
\end{equation} 

If $x\in B_{n^{-\theta}(\widetilde\ell)}$ for some $\widetilde\ell\in
\widetilde L$, then
$$
\bigl| (x-\widetilde\ell)^{\widetilde D} \bigr| \le n^{-\theta\widetilde D},
$$
and hence 
$$
|\Ann_{\widetilde D,\widetilde L}(x)|
= |x-\widetilde\ell|^{\widetilde D} 
\prod_{\ell'\in\widetilde L\setminus\{\widetilde\ell\}  }
|x-\ell'|^{\widetilde D}
\le n^{-\theta\widetilde D}  (\Lambda_1+|\ell'|)^{\widetilde D} .
$$
In view of \eqref{eq_apply_shift_exponential} we have
$$
|\Ann_{\widetilde D,\widetilde L}(S) x^k| 
= | \Ann_{\widetilde D,\widetilde L}(x) | |x|^k
\le n^{-\theta\widetilde D}  C \Lambda_1^k .
$$
Combining this with the trivial $E_1 \le n$ yields
\begin{equation}\label{eq_E1M1}
E_1 M_1 \le C n^{1-\theta\widetilde D} \Lambda_1^k .
\end{equation} 

Finally
\begin{equation}\label{eq_E3}
E_3 = \EE{\rm out}_{\cM_n}\bigl[B_{\Lambda_0+\epsilon}(0)\cup
B_{n^{-\theta}}(L)\bigr]
\end{equation} 
and 
\begin{equation}\label{eq_M3}
M_3 \le \max_{|x|\le \Lambda_1} 
\bigl( |\Ann_{\widetilde D,\widetilde L}(x)| |x|^k \bigr)
\le C \Lambda_1^k.
\end{equation} 

Combining 
\eqref{eq_E2M2}, \eqref{eq_E1M1}, \eqref{eq_E3}, 
\eqref{eq_M3},
with
\eqref{eq_g0_ann_realtrace} we have
\begin{equation}\label{eq_ann_applied_to_two}
\biggl|
\Ann_{\widetilde D,\widetilde L}(S)
\Bigl( g_0(k,n) - \EE_{M\in \cM_n}\bigl[ {\rm RealTrace}(M,k) \bigr] \Bigr)
\biggr|
\end{equation} 
is bounded by \eqref{eq_stuff_to_bound}.

Since \eqref{eq_ann_applied_to_two} is bounded above by
\eqref{eq_stuff_to_bound}, and since \eqref{eq_ann_applied_to_two}
clearly equals
$$
\biggl|
\Ann_{\widetilde D,\widetilde L}(S)
 g_0(k,n) - 
\Ann_{\widetilde D,\widetilde L}(S)
\EE_{M\in \cM_n}\bigl[ {\rm RealTrace}(M,k) \bigr] 
\biggr| ,
$$
to finish the proof of Lemma~\ref{le_control_expectation} it suffices to 
show that
\begin{equation}\label{eq_last_part}
\Ann_{\widetilde D,\widetilde L}(S) g_0(k,n) 
=
\Big(1+O\bigl(n^{-\theta}\bigr)\Bigr)
\Ann_{\widetilde D,\widetilde L}(\ell) \ell^k \EE{\rm in}_{\cM_n}\bigl[B_{n^{-\theta}}(\ell)\bigr] 
\end{equation} 
for fixed $\widetilde D,\widetilde L,\ell$, and large $n,k$ with
$k\ll n^{\theta}$.
Let us prove this.

Since $\ell\in\reals$,
the set
$B_{n^{-\theta}}(\ell)\cap\reals$ is just the closed interval
$I_n=[\ell-n^{-\theta},\ell+n^{-\theta}]$; so setting
$$
h(x) \eqdef \Ann_{\widetilde D,\widetilde L}(x)x^k,
$$
we have that for any $\lambda\in I_n$ there is a $\xi\in I_n$ for which
$$
h(\lambda)-h(\ell) = (\lambda-\ell) h'(\xi).
$$
The product rule shows that
$$
|h'(x)| \le C |\xi|^k \le C(|\ell|+n^{-\theta})^k\le C'|\ell|^k
$$
provided that $k\ll n^{\theta}$.
Hence
\begin{equation}\label{eq_h_mean_value_est}
|h(\lambda)-h(\ell) | \le |\lambda-\ell| C'|\ell|^k
\le C' n^{-\theta} |\ell|^k.
\end{equation} 
Since $\ell\notin\widetilde L$,
the quantity
$$
A = \Ann_{\widetilde D,\widetilde L}(\ell) \ne 0
$$
(and is, of course, independent of $k,n$); since
$h(\ell)=A\ell^k$ (by \eqref{eq_apply_shift_exponential}), 
\eqref{eq_h_mean_value_est} implies
$$
h(\lambda)=
h(\ell) + O(n^{-\theta})|\ell^k|
=A\ell^k + O(n^{-\theta})|\ell^k| 
=A\ell^k \Bigl( 1 + O\bigl(n^{-\theta}\bigr) \Bigr),
$$
which holds for all $\lambda\in I_n=B_{n^{-\theta}}(\ell)\cap \reals$.
It follows that for any $M\in\cM_n$ we have
$$
\Ann_{\widetilde D,\widetilde L}(S) 
{\rm RealTrace}\Bigl(M,k; B_{n^{-\theta}}(\ell) \Bigr)
= \bigl( \#\Spec(M)\cap B_{n^{-\theta}}(\ell) \bigr)
A \ell^k \Bigl( 1 + O\bigl(n^{-\theta} \bigr) \Bigr)  .
$$
Taking expected values yields
\begin{align*}
\Ann_{\widetilde D,\widetilde L}(S) g_0(k,n) 
& =
\EE{\rm in}_{\cM_n}\bigl[B_{n^{-\theta}}(\ell)\bigr]
A \ell^k \Bigl( 1 + O\bigl(n^{-\theta} \bigr)\Bigr) 
\\
& =
\EE{\rm in}_{\cM_n}\bigl[B_{n^{-\theta}}(\ell)\bigr]
\Ann_{\widetilde D,\widetilde L}(\ell) 
\ell^k \Bigl( 1 + O\bigl(n^{-\theta} \bigr) \Bigr)
\end{align*}
which proves \eqref{eq_last_part};
by the discussion in the paragraph containing \eqref{eq_last_part},
this completes the proof.
\end{proof}

\subsection{A Shift Computation}

Here is a simple lemma regarding the shift operation which we will need
regarding the left-hand-side of
\eqref{eq_le_real_trace_bound_r1}.

\begin{lemma}\label{le_shift_computation}
Let $p=p(k)$ be a non-zero polynomial with
complex coefficients, let $\ell\in\complex$, and let
$\widetilde L\subset \complex\setminus\{\ell\}$ be a finite set.
Then for each $\widetilde D\in\naturals$ there is a polynomial
$\widetilde p(k)$ of the same degree as $p(k)$ such that
$$
\Ann_{\widetilde D,\widetilde L}(S)
\bigl( p(k)\ell^k \bigr) = \widetilde p(k) \ell^k;
$$
more precisely, if $p(k)=c_0 + c_1 k + \cdots + c_t k^t$ with $c_t\ne 0$, then
we have
$$
\widetilde p(k) = \widetilde c_0 + \widetilde c_1 k +
\cdots + \widetilde c_t k^t
$$
where $\widetilde c_t\ne 0$ and is given by
$$
\widetilde c_t = c_t \Ann_{\widetilde D,\widetilde L}(\ell) \ne 0.
$$
\end{lemma}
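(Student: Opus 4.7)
The plan is to reduce the statement to the behavior of a single factor $(S-\mu)$ with $\mu\neq\ell$ acting on $p(k)\ell^k$, and then iterate since $\Ann_{\widetilde D,\widetilde L}(S)=\prod_{\mu\in\widetilde L}(S-\mu)^{\widetilde D}$ factors into such terms.

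First I would record the basic identity: for any polynomial $p(k)=c_0+c_1k+\cdots+c_tk^t$ with $c_t\neq 0$, and any $\mu\in\complex$,
$$
(S-\mu)\bigl(p(k)\ell^k\bigr) = \bigl(\ell\,p(k+1)-\mu\,p(k)\bigr)\ell^k.
$$
Expanding $p(k+1)$ by the binomial theorem, the quantity in parentheses is a polynomial in $k$ whose $k^t$-coefficient is $(\ell-\mu)c_t$ and whose higher-degree coefficients vanish. Thus, provided $\mu\neq\ell$, applying $(S-\mu)$ to $p(k)\ell^k$ yields $q(k)\ell^k$ where $q$ has the same degree $t$ as $p$ and leading coefficient $(\ell-\mu)c_t$.

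Next I would iterate this observation. Write $\Ann_{\widetilde D,\widetilde L}(S)$ as a product of $\widetilde D\cdot\#\widetilde L$ factors of the form $(S-\mu)$ with $\mu\in\widetilde L$, hence with $\mu\neq\ell$ by hypothesis. Applying these factors one at a time to $p(k)\ell^k$, the preceding step shows that at each stage we remain in the class of functions of the form $(\text{polynomial of degree }t)\cdot\ell^k$, and that the leading coefficient is multiplied by $(\ell-\mu)$. After all factors are applied, we obtain $\widetilde p(k)\ell^k$ where $\widetilde p$ has degree $t$ and leading coefficient
$$
\widetilde c_t = c_t\prod_{\mu\in\widetilde L}(\ell-\mu)^{\widetilde D} = c_t\,\Ann_{\widetilde D,\widetilde L}(\ell).
$$
Since $\ell\notin\widetilde L$, each factor $\ell-\mu$ is nonzero, hence $\widetilde c_t\neq 0$, which is the claimed formula.

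There is no real obstacle here; the only point requiring care is checking that the degree is strictly preserved (not merely non-increased) under each single factor, which is exactly where the hypothesis $\ell\notin\widetilde L$ enters to rule out cancellation of the leading term $(\ell-\mu)c_t k^t$. Once this one-step computation is in hand, the lemma follows by a straightforward induction on the number of factors in $\Ann_{\widetilde D,\widetilde L}(S)$, with the commutativity of polynomials in $S$ ensuring the order of application does not matter.
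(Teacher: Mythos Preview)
Your proof is correct and follows essentially the same approach as the paper: compute the effect of a single factor $(S-\mu)$ on $p(k)\ell^k$, observe that it multiplies the leading coefficient by $\ell-\mu$ while preserving degree (since $\mu\neq\ell$), and then iterate over all factors of $\Ann_{\widetilde D,\widetilde L}(S)$ to obtain the stated formula for $\widetilde c_t$. The only cosmetic difference is that the paper first treats the monomial $k^t\ell^k$ before passing to $p(k)\ell^k$, and organizes the iteration as an induction on $\#\widetilde L$, whereas you compute directly on $p$ and count factors; both routes are equivalent.
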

\begin{proof}
For any $\widetilde\ell\ne \ell$, we see that
$$
(S-\widetilde \ell) \bigl( k^t\ell^k \bigr) =
(k+1)^t \ell^{k+1} - k^t \widetilde\ell \ell^k 
= \ell^k \bigl( k^t (\ell-\widetilde\ell) + \cdots \bigr)
$$
where the $\cdots$ is a polynomial in $k$ of degree less than $t$.
It follows that 
$$
(S-\widetilde \ell) \bigl( p(k)\ell^k \bigr) = \ell^k 
\bigl( c_t k^t (\ell-\widetilde\ell) + \cdots \bigr)
$$
where the $\cdots$ is a polynomial in $k$ of degree less than $t$;
this
proves the claim for $\#L=1$ (i.e., $\widetilde L=\{\widetilde\ell\}$)
and $\widetilde D=1$.
Repeatedly applying $S-\widetilde\ell$ as above proves the
case $\widetilde L=\{\widetilde\ell\}$ and all $\widetilde D\in\naturals$.
For general $\widetilde D$ and $\widetilde L$, 
we write $\widetilde L=\{\ell_1,\ldots,\ell_m\}$, and write
$$
\Ann_{\widetilde D,\widetilde L}(S) \bigl(p(k)\ell^k\bigr)
=
\bigl(S-\ell_1\bigr)^{\widetilde D}\ldots
\bigl(S-\ell_m\bigr)^{\widetilde D} \bigl(p(k)\ell^k\bigr).
$$
We then first apply $(S-\ell_m)^{\widetilde D}$ to $ p(k)\ell^k$,
then apply $(S-\ell_{m-1})^{\widetilde D}$ to the result, etc.,
and the lemma follows (more formally, the lemma follows by induction
on $m=\#\widetilde L$).
%
%
\end{proof}

\subsection{Proof of Lemma~\ref{le_sidestep_two}}

\begin{proof}[Proof of Lemma~\ref{le_sidestep_two}]
Let $\widetilde\epsilon=\epsilon/3$, and let $\kappa_0$ be given by
$$
\kappa_0 = \frac{j+2}{\log\Bigl( (\Lambda_0+2\widetilde\epsilon) /
(\Lambda_0+\widetilde\epsilon) \Bigr)}
$$
where the logarithm, and all logarithms below,
are taken relative to a fixed base [this choice of base does not affect
$r_1,\theta_1$ below and merely scales $\kappa_0$].
Clearly $\kappa_0$ is positive
and can be equivalently be described by the equation
\begin{equation}\label{eq_side_choose_kappa}
\kappa_0 \log(\Lambda_0+2\widetilde\epsilon) - j -2 = 
\kappa_0 \log(\Lambda_0+\widetilde\epsilon) .
\end{equation}
Let $\kappa$ be any real number with
$$
\kappa \ge \kappa_0;
$$
our proof will require us to do the present computation 
twice, with
two different values of $\kappa$; this is why we don't choose
one specific value of $\kappa$.
Then let
\begin{align}
\label{eq_def_widetilde_alpha}
\widetilde\alpha & =
j+1+\kappa \log(\Lambda_1) - \kappa  \log(\Lambda_0+2\widetilde\epsilon) \\
\label{eq_def_widetilde_r}
\widetilde r & = j+1 + 
\lceil \kappa \log(\Lambda_1+\widetilde\epsilon)
- \kappa \log(\Lambda_0+2\widetilde\epsilon) \rceil
\end{align}
where $\lceil\ \rceil$ denotes the ceiling function (the least upper bound
that is an integer).
Since $\Lambda_1\ge 3\widetilde\epsilon + \Lambda_0$, we have
$\widetilde\alpha, \widetilde r>0$.

We then set
\begin{align}
\label{eq_def_r_1}
r_1 &= 
\max\bigl( \widetilde r,
r_0(\Lambda_0,\Lambda_1,\widetilde\epsilon,\widetilde\alpha) \bigr) \\
\label{eq_def_theta_1}
\theta_1 &= \theta_0(\Lambda_0,\Lambda_1,\widetilde\epsilon,\widetilde\alpha) ,
\end{align}
with $r_0,\theta_0$ being the functions in
Lemma~\ref{le_sidestep_one}.
For now we have that $r_1,\theta_1$ depend on $\kappa$ as well as
$\Lambda_0,\Lambda_1,j,\epsilon$
(since $\widetilde r,\widetilde\alpha$ depend on $\kappa$); we will ultimately
take $\kappa=\kappa_0$ to produce the $r_1,\theta_1$ in
Lemma~\ref{le_sidestep_two}.

Next let $\theta\in\reals$ 
and $r\in\naturals$ satisfy
$$
0<\theta \le \theta_1(\Lambda_0,\Lambda_1,j,\epsilon,\kappa),\quad
r \ge r_1(\Lambda_0,\Lambda_1,j,\epsilon,\kappa).
$$
Let us prove \eqref{eq_most_useful}--\eqref{eq_q_ell_j_formula}.

Since
$$
r\ge r_1 \ge r_0(\Lambda_0,\Lambda_1,\widetilde\epsilon,\widetilde\alpha), 
\quad
\theta\le \theta_1 
\le \theta_0(\Lambda_1,\Lambda_0,\widetilde\epsilon,\widetilde\alpha) ,
$$
Lemma~\ref{le_sidestep_one} implies that
\begin{equation}\label{eq_application_exception}
\EE{\rm out}_{\cM_n}
\bigl[B_{\Lambda_0+\widetilde\epsilon}(0)\cup B_{n^{-\theta_1}}(L_{r_1})\bigr]
\le n^{-\widetilde\alpha} \le n^{-j-1}
\end{equation} 
for $n$ sufficiently large; since $\theta\le \theta_1$, 
$r\ge r_1$, and $\widetilde\epsilon<\epsilon$, we have
$$
B_{\Lambda_0+\widetilde\epsilon}(0)\cup B_{n^{-\theta_1}}(L_{r_1})
\subset
B_{\Lambda_0+\epsilon}(0)\cup B_{n^{-\theta}}(L_r)
$$
and hence
\begin{equation}\label{eq_EXE_tildes}
\EE{\rm out}_{\cM_n}
\bigl[B_{\Lambda_0+\epsilon}(0)\cup B_{n^{-\theta}}(L_r)\bigr]
\le n^{-j-1}
\end{equation}
for $n$ sufficiently large.

Note that $r_1\ge \widetilde r$ by \eqref{eq_def_r_1},
and $\widetilde r\ge j+1$ by \eqref{eq_def_widetilde_r},
and hence 
\begin{equation}\label{eq_r_greater_j}
r\ge r_1\ge j+1, \quad\mbox{and}\quad
L_{j+1}\subset L_r .
\end{equation}

Now let $\ell\in L_r$ with $|\ell|\ge \Lambda_0+2\widetilde\epsilon$,
and set
$$
\widetilde L = L_r \setminus \{\ell\}.
$$
Let $\widetilde D\in\naturals$ be sufficiently large so that
Lemma~\ref{le_real_eigs_bound_with_ell} holds and that
\begin{equation}\label{eq_side_choose_widetilde_D}
\kappa \log(\Lambda_0+2\widetilde\epsilon) - j -1 \ge \kappa \log(\Lambda_1)+1-
\theta \widetilde D \ .
\end{equation}
Lemma~\ref{le_real_eigs_bound_with_ell} implies that
for some function $K(n)$ with $K(n)\gg \log n$ we have that
for all $k\in\naturals$
with $1\le k\le K(n)-\widetilde D(\#\widetilde L)$ we have
\begin{equation}\label{eq_apply_first_lemma}
\biggl|
\Ann_{\widetilde D,\widetilde L}(S)
\Bigl( \EE_{M\in\cM_n}\bigl[ {\rm RealTrace}(M,k) \bigr]  -
\sum_{i=0}^{r-1} \ell^k q_{\ell,i}(k)n^{-i} \Bigr)
\biggr| \le f_0(k)n+f_1(k)n^{-r} ,
\end{equation}
where $f_0,f_1$ are functions of respective growths $\Lambda_0,\Lambda_1$.

Now for $n$ such that $\cM_n$ is defined, let $k(n)$ be any function
with $k(n)-\kappa\log n=O(1)$.
Let us prove that for $k=k(n)$, as 
$n\to\infty$ (over $n$ where $\cM_n$ is defined) 
we have
\begin{align}
\label{eq_f_0_k_of_n}
\bigl| f_0\bigl( k(n) \bigr) n \bigr| 
& = o \bigl( |\ell|^{k(n)}n^{-j} \bigr) \quad\mbox{and} \\
\label{eq_f_1_n_r}
\bigl| f_1\bigl( k(n) \bigr) n^{-r}  \bigr|
& = o \bigl( |\ell|^{k(n)}n^{-j} \bigr)  .
\end{align}
Since $f_0$ is of growth $\Lambda_0$, we have
\begin{equation}\label{eq_f_0_intermediate}
f_0\bigl(  k(n) \bigr) n  = O(1) ( \Lambda_0+\widetilde\epsilon )^{k(n)} n
\le O(1) ( \Lambda_0+\widetilde\epsilon )^{\kappa\log n+O(1)} n.
\end{equation} 
Since $\kappa\ge\kappa_0$, we have that 
\eqref{eq_side_choose_kappa} also holds with $\kappa$ replacing
$\kappa_0$, and hence multiplying the equation by $\log n$ we have
\begin{equation}\label{eq_Lambda_0_widetidle_eps}
( \Lambda_0+\widetilde\epsilon )^{\kappa\log n} n
\le ( \Lambda_0+2\widetilde\epsilon )^{\kappa\log n} n^{-j-2}
\le |\ell|^{\kappa\log n} n^{-j-2} \le |\ell|^{k(n)+O(1)}n^{-j-2} ;
\end{equation} 
combining this with \eqref{eq_f_0_intermediate} yields
$$
f_0(\bigl( k(n) \bigr) n  = |\ell|^{k(n)}n^{-j}O(1/n) 
$$
which establishes \eqref{eq_f_0_k_of_n}.
The proof of \eqref{eq_f_1_n_r} is similar: in view of
\eqref{eq_def_widetilde_r} and $r\ge \widetilde r$ we have
\begin{align*}
n^{-r} \le n^{-\widetilde r}
& \le n^{-j-1} n^{-\kappa\log(\Lambda_1+\widetilde\epsilon)}
n^{\kappa\log(\Lambda_0+2\widetilde\epsilon)} \\
& = n^{-j-1} O(1) (\Lambda_1+\widetilde\epsilon)^{-k(n)}
(\Lambda_0+2\widetilde\epsilon)^{k(n)} 
\end{align*}
where the last equality holds since $k(n)=\kappa\log n +O(1)$;
since $f_1(k)\le (\Lambda_1+\widetilde\epsilon)^{k(n)}$ for large
$n$, we have
$$
f_1\bigl( k(n) \bigr) n^{-r} = O(1/n)n^{-j}
n^{\kappa\log(\Lambda_0+2\widetilde\epsilon)}
=  |\ell|^{k(n)}n^{-j}O(1/n)
$$
which proves \eqref{eq_f_1_n_r}.

Combining \eqref{eq_apply_first_lemma}--\eqref{eq_f_1_n_r} we have
\begin{equation}\label{eq_mess_versus_ell_k_of_n}
\biggl|
\Ann_{\widetilde D,\widetilde L}(S)
\Bigl( \EE_{M\in\cM_n}\bigl[ {\rm RealTrace}(M,k) \bigr]  -
\sum_{i=0}^{r-1} \ell^k q_{\ell,i}(k)n^{-i} \Bigr)
\biggr| 
\le o \bigl( |\ell|^{k(n)}n^{-j} \bigr)
\end{equation}
provided that $1\le k(n)\le K(n)-\widetilde D(\#\widetilde L)$.
Since $k(n)=\kappa\log n + O(1)$, we have that 
$1\le k(n)\le K(n)-\widetilde D(\#\widetilde L)$ for
sufficiently large $n$.
With this in mind, we now finish the proof
of Lemma~\ref{le_sidestep_two}
by estimating the left-hand-side of
\eqref{eq_mess_versus_ell_k_of_n}
up to terms of size $o(|\ell|^{k(n)}n^{-j})$.

Applying Lemma~\ref{le_shift_computation} we have 
$$
\Ann_{\widetilde D,\widetilde L}(S)
\ell^{k(n)}\sum_{i=0}^{r-1} q_{\ell,i}\bigl(k(n)\bigr)/n^i =
\ell^{k(n)}\sum_{i=0}^{r-1} Q_i\bigl(k(n)\bigr)/n^i
$$
where $Q_i=Q_i(k)$ is the (unique) polynomial such that
$$
\Ann_{\widetilde D,\widetilde L}(S)
\bigl( \ell^k q_{\ell,i}(k) \bigr)
=
\ell^k Q_i(k);
$$
hence $Q_i(k)$ is of the same degree as $q_{\ell,i}(k)$ and the
leading coefficient of $Q_i(k)$ is 
$\Ann_{\widetilde D,\widetilde L}(\ell)$ times
that of $q_{\ell,i}(k)$.  Since $L_j=\emptyset$ we have
$q_{\ell,i}(k)=0$ for all $i<j$; it follows that
$$
\Ann_{\widetilde D,\widetilde L}(S)
\ell^{k(n)}\sum_{i=0}^{r-1} q_{\ell,i}\bigl(k(n)\bigr)/n^i =
\ell^{k(n)} Q_j(k(n)) n^{-j} + |\ell|^{k(n)} n^{-j} 
O\Bigl( \bigl(k(n)\bigr)^m / n\Bigr)
$$
where $m$ is the degree of $q_{\ell,j+1}(k)$.  Since $k(n)=O(\log n)$
we have
\begin{equation}\label{eq_ann_on_series}
\Ann_{\widetilde D,\widetilde L}(S)
\ell^{k(n)}\sum_{i=0}^{r-1} q_{\ell,i}(k(n))/n^i =
\ell^{k(n)} Q_j(k(n)) n^{-j} + |\ell|^{k(n)} n^{-j} o(1)
\end{equation} 

Next consider
\begin{equation}\label{eq_ann_on_expected}
\Ann_{\widetilde D,\widetilde L}(S)
\EE_{M\in\cM_n}\bigl[ {\rm RealTrace}(M,k(n)) \bigr].
\end{equation} 
Applying Lemma~\ref{le_control_expectation} with 
$\widetilde\epsilon$ replacing $\epsilon$, we have \eqref{eq_ann_on_expected}
equals 
\begin{equation}\label{eq_leftover_from_near_ell}
\Big(1+O\bigl(n^{-\theta}\bigr)\Bigr)
\Ann_{\widetilde D,\widetilde L}(\ell) \ell^{k(n)}
\EE{\rm in}_{\cM_n}[B_{n^{-\theta}}(\ell) ] + O(1){\rm Term}(n)
\end{equation} 
where 
$$
{\rm Term}(n) = {\rm Term}_1(n) + {\rm Term}_2(n)
$$
where
$$
{\rm Term}_1(n) =
(\Lambda_0+\widetilde\epsilon)^{k(n)} n,\quad
{\rm Term}_2(n) =
\Lambda_1^{k(n)}
\Bigl( n^{1-\theta\widetilde D}+
\EE{\rm out}_{\cM_n}\bigl[B_{\Lambda_0+\widetilde\epsilon}(0)\cup
B_{n^{-\theta}}(\widetilde L)\bigr]
\Bigr) ;
$$
let us show that ${\rm Term}_1(n),{\rm Term}_2(n)$ are both
$=o(|\ell|^{k(n)}n^{-j})$:
we have already shown
$$
(\Lambda_0+\widetilde\epsilon)^{k(n)} n  = o(|\ell|^{k(n)}n^{-j})
$$
in \eqref{eq_Lambda_0_widetidle_eps}, so  
${\rm Term}_1(n)=o(|\ell|^{k(n)}n^{-j})$.
Also
\begin{align*}
\EE{\rm out}_{\cM_n}\bigl[
B_{\Lambda_0+\widetilde\epsilon}(0)\cup B_{n^{-\theta}}(L_r)
\bigr] 
& \le
\EE{\rm out}_{\cM_n}\bigl[
B_{\Lambda_0+\widetilde\epsilon}(0)\cup B_{n^{-\theta}}(L_{r_1})
\bigr] 
\le n^{-\widetilde\alpha}  \\
& = n^{-j-1-\kappa\log(\Lambda_1)+\kappa\log(\Lambda_0+2\widetilde\epsilon)}
\end{align*}
by
\eqref{eq_application_exception}.
Furthermore \eqref{eq_side_choose_widetilde_D} implies that
$$
1-\theta\widetilde D \le \kappa \log(\Lambda_0+2\widetilde\epsilon) - j -1
- \kappa \log(\Lambda_1) 
$$
and therefore
$$
n^{1-\theta \widetilde D}  \le 
n^{-j-1-\kappa\log(\Lambda_1)+\kappa\log(\Lambda_0+2\widetilde\epsilon)} .
$$
It follows that 
$$
{\rm Term}_2(n) \le
\Lambda_1^{k(n)}
n^{-j-1-\kappa\log(\Lambda_1)+\kappa\log(\Lambda_0+2\widetilde\epsilon)}
$$
which---using $k(n)=\kappa\log n + O(1)$ (as before)---is bounded by
$$
O(1/n)n^{-j} (\Lambda_0+2\widetilde\epsilon)^{k(n)}
=
o\bigl(|\ell|^{k(n)} n^{-j} \bigr).
$$

Hence ${\rm Term}(n)=o(|\ell|^k n^{-j})$; combining this with
\eqref{eq_ann_on_series},
\eqref{eq_ann_on_expected}, 
\eqref{eq_leftover_from_near_ell}, and 
\eqref{eq_mess_versus_ell_k_of_n} we have that
$$
\Big(1+O\bigl(n^{-\theta}\bigr)\Bigr)
\Ann_{\widetilde D,\widetilde L}(\ell) \ell^k \EE{\rm in}_{\cM_n}\bigl[B_{n^{-\theta}}(\ell)\bigr]
-
\Ann_{\widetilde D,\widetilde L}(S)
\bigl( \ell^k q_{\ell,j}(k)n^{-j} \bigr) = o\bigl(|\ell|^k n^{-j}\bigr) \ .
$$
Dividing by $\ell^k n^{-j}$ and applying
Lemma~\ref{le_shift_computation} we have that
$$
\Ann_{\widetilde D,\widetilde L}(\ell)
\Big(1+O\bigl(n^{-\theta}\bigr)\Bigr) 
\EE{\rm in}_{\cM_n}\bigl[B_{n^{-\theta}}(\ell)\bigr] n^j 
-
Q_j(k(n)) = o(1).
$$
Dividing by the function that is $1+O(n^{-\theta})$ above, we have
$$
\Ann_{\widetilde D,\widetilde L}(\ell)
\EE{\rm in}_{\cM_n}\bigl[B_{n^{-\theta}}(\ell)\bigr] n^j  
= Q_j(k(n))  (1+o(1)) 
= Q_j(\kappa\log n) (1+o(1))
$$
where the last equality uses the fact that $k(n)=\kappa\log n+O(1)$.

Now we claim that $Q_j(k)$ must be a constant, rather than a polynomial
of degree one or greater. 
Indeed, let $\kappa'\ge\kappa_0$ with $\kappa'\ne\kappa$.  
Repeating all the above with $\kappa'$ replacing $\kappa$ we have
$$
\Ann_{\widetilde D,\widetilde L}(\ell)
\EE{\rm in}_{\cM_n}\bigl[B_{n^{-\theta}}(\ell)\bigr] n^j  
= Q_j(\kappa\log n) (1+o(1))
 = Q_j(\kappa'\log n) (1+o(1))
$$
as $n\to\infty$; the equality
$$
Q_j(\kappa\log n) (1+o(1))
 = Q_j(\kappa'\log n) (1+o(1))
$$
as $n\to\infty$, and $\kappa'\ne \kappa$,
implies that the polynomial $Q_j$ must be a constant.

Since $Q_j=Q_j(k)$ is a constant, the limit
$$
\lim_{n\to\infty} \Bigl(
\Ann_{\widetilde D,\widetilde L}(\ell)
\EE{\rm in}_{\cM_n}\bigl[B_{n^{-\theta}}(\ell)\bigr]
n^j
\Bigr) 
$$
exists and equals the constant $Q_j$.
But Lemma~\ref{le_shift_computation} implies that $q_{\ell,j}=q_{\ell,j}(k)$
is also a constant and 
$$
Q_j = \Ann_{\widetilde D,\widetilde L}(\ell) q_{\ell,j}.
$$
It follows that
\begin{equation}\label{eq_limit_smallest_coefficient}
\lim_{n\to\infty} \Bigl(
\EE{\rm in}_{\cM_n}\bigl[B_{n^{-\theta}}(\ell)\bigr]
n^j
\Bigr)  = q_{\ell,j}.
\end{equation} 
This proves
\eqref{eq_C_ell_alternate_formula} and
\eqref{eq_q_ell_j_formula}.

Next let
$$
L_r' = \{ \ell\in L_r \ | \ |\ell| \ge \Lambda_0+2\epsilon/3 \}.
$$
For sufficiently large $n$---and more precisely for 
$n^{-\theta}<\epsilon/3$ (here we use $\theta>0$)---we have
$$
B_{\Lambda_0+\epsilon}(0)\cup B_{n^{-\theta}}(L_r)
=
B_{\Lambda_0+\epsilon}(0)\cup B_{n^{-\theta}}(L_{j+1})
\cup B_{n^{-\theta}}(L_r'\setminus L_{j+1}).
$$
It follows that
$$
\EE{\rm out}_{\cM_n}\bigl[
B_{\Lambda_0+\epsilon}(0)\cup B_{n^{-\theta}}(L_{j+1})
\bigr]
\le
\EE{\rm out}_{\cM_n}\bigl[
B_{\Lambda_0+\epsilon}(0)\cup B_{n^{-\theta}}(L_r)
\bigr]
+
\EE{\rm in}_{\cM_n}\bigl[
B_{n^{-\theta}}(L_r'\setminus L_{j+1})
\bigr].
$$
But for all $\ell\in L_r'\setminus L_{j+1}$ we have $q_{\ell,j}=0$
since $\ell\notin L_{j+1}$.  
Applying
\eqref{eq_limit_smallest_coefficient} to all
$\ell\in L_r'\setminus L_{j+1}$ and summing over such $\ell$ we have
$$
\EE{\rm out}_{\cM_n}\bigl[
B_{\Lambda_0+\epsilon}(0)\cup B_{n^{-\theta}}(L_{j+1})
\bigr]
\le
\EE{\rm out}_{\cM_n}\bigl[
B_{\Lambda_0+\epsilon}(0)\cup B_{n^{-\theta}}(L_r)
\bigr]
+ o(n^{-j}).
$$
Combining this with
\eqref{eq_EXE_tildes} yields
\eqref{eq_exception_for_side_stepping}.

Similarly we have that if 
$$
L_{j+1}' = \{ \ell\in L_{j+1} \ | \ |\ell| \ge \Lambda_0+2\epsilon/3 \},
$$
then for sufficiently large $n$ 
\begin{align*}
\EE{\rm out}_{\cM_n}\bigl[
B_{\Lambda_0+\epsilon}(0)
\bigr] 
& \le
\EE{\rm out}_{\cM_n}\bigl[
B_{\Lambda_0+\epsilon}(0)\cup B_{n^{-\theta}}(L_{j+1})
\bigr]
+
\EE{\rm in}_{\cM_n}\bigl[
B_{n^{-\theta}}(L_{j+1})
\bigr]
\\
& \le 
\EE{\rm out}_{\cM_n}\bigl[
B_{\Lambda_0+\epsilon}(0)\cup B_{n^{-\theta}}(L_{j+1})
\bigr]
+
O(n^{-j}) ,
\end{align*}
which proves \eqref{eq_most_useful}.
\end{proof}

\section{Proof of the Sidestepping Theorem}
\label{se_side_theorem}

\begin{proof}[Proof of Theorem~\ref{se_side_theorem}]
Using induction on $i$,
the formula \eqref{eq_c_i_limit_formula} shows that 
for any
$i\in\integers_{\ge 0}$, the function $c_i=c_i(k)$ appearing in
the expansions \eqref{eq_matrix_model_exp} are independent of
$r$ over all $r\ge i+1$ (for $r\le i-1$, the function $c_i$ does not
exist in \eqref{eq_matrix_model_exp}, and for $r=i$ the term
$O(c_i(k))/n^i$ does not uniquely determine $c_i$).

If $p_j(k)$, the polyexponential part of $c_j(k)$ with respect to
$\Lambda_0$, vanishes for all $j$, 
then according to Lemma~\ref{le_sidestep_two}, \eqref{eq_most_useful},
$$
\EE{\rm out}_{\cM_n}[B_{\Lambda_0+\epsilon}(0)] = O(n^{-j})
$$
for all $j$.

Otherwise let $j$ be the smallest integer such that $p_j(k)\ne 0$.
Then $p_j(k)$ has a finite number of bases, so there exists an
$\epsilon_0>0$ such that (1) all of these bases have absolute value
at least
$\Lambda_0+2\epsilon_0/3$, and (2) $\Lambda_0+\epsilon_0\le \Lambda_1$.
According to Lemma~\ref{le_sidestep_two}, for any $\epsilon>0$
with $\epsilon\le\epsilon_0$, the existence of an
expansion of order $r_1=r_1(\Lambda_0,\Lambda_1,j,\epsilon)$
for $\cM_n$ (which exist for any order) implies
\eqref{eq_exception_for_side_stepping} and that
$$
p_j(k) = \sum_{\ell\in L_{j+1}} C_\ell \ell^k
$$
where the $C_\ell>0$ are constants given by
\eqref{eq_C_ell_alternate_formula}
for $\theta>0$ satisfying 
$\theta\le\theta_1(\Lambda_0,\Lambda_1,j,\epsilon)$.
This proves
\eqref{eq_mostly_near_Lambda_0_or_Ls}---\eqref{eq_thm_C_ell_as_limit},
provided that $\epsilon\le\epsilon_0$ and $\theta\le\theta_1$;
of course, if $\epsilon\ge\epsilon_0$, then
\eqref{eq_mostly_near_Lambda_0_or_Ls} for $\epsilon=\epsilon_0$
also implies this for any larger value of $\epsilon$.
\end{proof}


\providecommand{\bysame}{\leavevmode\hbox to3em{\hrulefill}\thinspace}
\providecommand{\MR}{\relax\ifhmode\unskip\space\fi MR }
\providecommand{\MRhref}[2]{%
  \href{http://www.ams.org/mathscinet-getitem?mr=#1}{#2}
}
\providecommand{\href}[2]{#2}

\end{document}